\tikzset{
  closed/.style = {decoration = {markings, mark = at position 0.5 with { \node[transform shape, xscale = .8, yscale=.4] {/}; } }, postaction = {decorate} },
  open/.style = {decoration = {markings, mark = at position 0.5 with { \node[transform shape, scale = .7] {$\circ$}; } }, postaction = {decorate} }
}
\newcommand{\mZ}{\mathbb{Z}}
\newcommand{\mQ}{\mathbb{Q}}
\newcommand{\mF}{\mathbb{F}}
\newcommand{\mC}{\mathbb{C}}
\newcommand{\cD}{\mathcal{D}}
\newcommand{\cA}{\mathcal{A}}
\newcommand{\cO}{\mathcal{O}}
\newcommand{\cB}{\mathcal{B}}
\DeclareSymbolFont{cyrletters}{OT2}{wncyr}{m}{n}
\DeclareMathSymbol{\Sha}{\mathalpha}{cyrletters}{"58}
\DeclareMathSymbol{\Sha}{\mathalpha}{cyrletters}{"58}
\newcommand{\brk}[1]{ \left\lbrace #1 \right\rbrace }
\newcommand{\sF}{{\mathscr F}}
\newcommand{\uu}{\underline{u}}
\numberwithin{equation}{section}
\newtheorem{thmx}{Theorem}
\numberwithin{equation}{subsection}
\newtheorem{theorem}[subsection]{Theorem}
\newtheorem{lemma}[subsection]{Lemma}
\newtheorem{proposition}[subsection]{Proposition}
\theoremstyle{definition}
\newtheorem{definition}[subsection]{Definition}
\theoremstyle{remark}
\newtheorem{remark}[subsection]{Remark}
\numberwithin{equation}{section} \numberwithin{figure}{section}
\DeclareMathOperator{\gal}{Gal}
\DeclareMathOperator{\Lie}{Lie}
\DeclareMathOperator{\Hom}{Hom}
\DeclareMathOperator{\Spf}{Spf}
\newcommand{\cdef}[1]{\textsf{\textit{#1}}}
\renewcommand{\leq}{\leqslant}
\renewcommand{\le}{\leqslant}
\renewcommand{\geq}{\geqslant}
\renewcommand{\ge}{\geqslant}
\DeclareMathOperator{\Cz}{Cz}
\newcommand\nc{\newcommand}
\newcommand{\Z}{{\mathbb Z}}
\newcommand{\Q}{{\mathbb Q}}
\newcommand{\C}{{\mathbb C}}
\newcommand{\F}{{\mathbb F}}
\newcommand{\N}{{\mathbb N}}
\newcommand{\cE}{{\mathcal E}}
\newcommand{\Kbar}{{\overline{K}}}
\let\@wraptoccontribs\wraptoccontribs
\begin{document}

\title{Ramification of $p$-power torsion points of formal groups}

\author{Adrian Iovita}
\address{Adrian Iovita \\
	Concordia University \\
	Department of Mathematics and Statistics \\ 
	Montr\'eal, Qu\'ebec, Canada and 
Dipartimento di Matematica \\
Universita degli Studi di Padova \\
Padova, Italy
}
\email{adrian.iovita@concordia.ca}

\author{Jackson S. Morrow}
\address{Jackson S. Morrow \\
	Department of Mathematics \\
	University of California, Berkeley \\
	749 Evans Hall, Berkeley, CA 94720}
\email{jacksonmorrow@berkeley.edu}

\author{Alexandru Zaharescu}
\address{Alexandru Zaharescu\\
Department of Mathematics \\
University of Illinois Urbana-Champaign \\
1409 West Green Street, Urbana, IL 61801, USA and
''Simion Stoilow'' Institute of Mathematics of the Romanian Academy\\
P.O. Box 1-764, RO-014700 Bucharest, Romania.
}
\email{zaharesc@illinois.edu}

\subjclass
{11G10 (
14K20,  
11G25,  
14L05)}	

\keywords{Abelian varieties, Formal Groups, Ramification}
\date{\today}

\begin{abstract}
Let $p$ be a rational prime, let $F$ denote a finite, unramified extension of $\mathbb{Q}_p$, let $K$ be the completion of the maximal unramified extension of $\mathbb{Q}_p$, and let $\overline{K}$ be some fixed algebraic closure of $K$. 
Let $A$ be an abelian variety defined over $F$, with good reduction, let $\mathcal{A}$ denote the N\'eron model of $A$ over ${\rm Spec}(\mathcal{O}_F)$, and let $\widehat{\mathcal{A}}$ be the formal completion of $\mathcal{A}$ along the identity of its special fiber, i.e. the formal group of $A$.

In this work, we prove two results concerning the ramification of $p$-power torsion points on $\widehat{\mathcal{A}}$. 
One of our main results describes conditions on $\widehat{\mathcal{A}}$, base changed to $\text{Spf}(\mathcal{O}_K) $, for which the field $K(\widehat{\mathcal{A}}[p])/K$ is a tamely ramified extension where $\widehat{\mathcal{A}}[p]$ denotes the group of $p$-torsion points of $\widehat{\mathcal{A}}$ over $\mathcal{O}_{\overline{K}}$.  
This result generalizes previous work when $A$ is $1$-dimensional and work of Arias-de-Reyna when $A$ is the Jacobian of certain genus 2 hyperelliptic curves. 
\end{abstract}
\maketitle

 \section{\bf Introduction}\label{sec:intro}
In this work, we are interested in studying the ramification behaviour of the $p$-torsion points of the formal group associated to an abelian variety over an unramified local field.

Let $p$ be a rational prime, let $F$ denote a finite, unramified extension of $\Q_p$, let $K$ be the completion of the maximal unramified extension of $\Q_p$, let $\Kbar$ be some fixed algebraic closure of $K$, and let $\cO \coloneqq \mathcal{O}_{\Kbar}$. 
Let $A$ be an abelian variety defined over $F$, with good reduction, let $\cA$ denote the N\'eron model of $A$ over ${\rm Spec}(\cO_F)$, and let $\widehat{\cA}$ be the formal completion of $\cA$ along the identity of its special fiber, i.e. the formal group of $A$.

To state these results,  we need the following definitions. 
In \cite{fontaine:differentials}, Fontaine studied the $\cO$-module $\Omega:=\Omega^1_{\cO/\cO_K}\cong \Omega^1_{\cO/\cO_F}$ of K\"ahler differentials of $\cO$ over $\cO_K$, or over $\cO_F$. 
The $\cO$-module $\Omega$ is a torsion and $p$-divisible $\cO$-module, with a semi-linear action of $G_F$. 
Let $d\colon\cO\to \Omega$ denote the canonical derivation, which is surjective. 
We denote by $\cO^{(1)}:=\ker(d)$, the kernel of $d$, which is an $\cO_K$-sub-algebra of $\cO$. 
Membership of an element $x$ of $\cO$ inside of $\cO^{(1)}$ reflects ramification properties of $x$ (see e.g., \autoref{defn:delta} and \autoref{lemma:propertiesofdelta}). 

By using previous work of the authors \cite{IMZ:Fontaine}, we are able to prove that the $\mathcal{O}^{(1)}$-points of the Tate module of $\cA$ are trivial, which implies that following theorem. 

 \begin{thmx}\label{xthm:main1}
Let $A$ be an abelian variety over $F$ with good reduction. Then there is $n_0\ge 1$ such that 
for every $m\ge n_0$ and $0\neq P\in \widehat{\cA}[p^m](\cO)\setminus \widehat{\cA}[p^{n_0 -1 }](\cO)$, we have $P\notin\widehat{\cA}(\cO^{(1)})$.
\end{thmx}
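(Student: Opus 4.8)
The plan is to reduce Theorem~A to a statement about the Tate module of $\cA$ and the $\cO^{(1)}$-points thereof, using the cited result from \cite{IMZ:Fontaine}. First I would recall that giving a point $P\in\widehat{\cA}[p^m](\cO)$ is the same as giving a homomorphism $\mZ/p^m\mZ\to \widehat{\cA}(\cO)$, and that compatible systems of such points assemble into the Tate module $\Tate_p\widehat{\cA}=\varprojlim_m \widehat{\cA}[p^m](\cO)$, which (since $A$ has good reduction) sits inside $\Tate_pA$ with finite index controlled by the reduction map; in fact $\Tate_p\widehat{\cA}$ is the kernel of reduction on $\Tate_pA$, a free $\mZ_p$-module of rank $\dim A$. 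The key input I would invoke is that the $\cO^{(1)}$-points of $\Tate_p\cA$ are trivial, i.e. any $G_F$-equivariant system of $p$-power torsion points all lying in $\widehat{\cA}(\cO^{(1)})$ must vanish. This is exactly what the sentence preceding the theorem asserts is proved using \cite{IMZ:Fontaine}: one pairs a nonzero element of $\Tate_p$ against a differential via the $\cO$-module $\Omega$ and the canonical derivation $d\colon\cO\to\Omega$, and shows the resulting period/differential-form is nonzero, which forces some coordinate to leave $\cO^{(1)}=\ker(d)$.

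Next I would upgrade this statement about the Tate module (an inverse-limit, hence a priori an asymptotic statement) to the uniform statement in the theorem with an explicit threshold $n_0$. The mechanism is a compactness / finiteness argument: consider the set of points $P\in\bigcup_m\widehat{\cA}[p^m](\cO)$ with $P\in\widehat{\cA}(\cO^{(1)})$. I claim this set is contained in $\widehat{\cA}[p^{n_0-1}](\cO)$ for some $n_0$. Indeed, if not, one could find points $P_m$ of exact order $p^{k_m}$ with $k_m\to\infty$ all lying in $\widehat{\cA}(\cO^{(1)})$; by a diagonal/limit argument (using that $\widehat{\cA}[p^m]$ is finite and that $\cO^{(1)}$ is closed in $\cO$, and passing to multiples $p^{k_m - j}P_m$ to stabilize) one extracts a nonzero element of $\Tate_p\widehat{\cA}$ all of whose coordinates lie in $\cO^{(1)}$, contradicting the triviality of $\widehat{\cA}(\cO^{(1)})\cap \Tate_p$. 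The role of $G_F$-equivariance here: the limit element is automatically a genuine Tate-module element, but the hypothesis of \cite{IMZ:Fontaine} may only give triviality for Galois-stable systems, so I would either argue that $\cO^{(1)}$ is $G_F$-stable (it is, since $d$ is $G_F$-equivariant and $\Omega\cong\Omega^1_{\cO/\cO_F}$) and replace $P$ by the submodule it generates under $G_F$, or run the argument directly on the $\mZ_p[[G_F]]$-submodule generated by all the $P_m$.

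Finally I would package this: set $n_0$ to be one more than the exponent bounding the torsion subgroup $\{P : P\in\widehat{\cA}(\cO^{(1)})\}\cap\widehat{\cA}[p^\infty](\cO)$, so that any $P\in\widehat{\cA}[p^m](\cO)\setminus\widehat{\cA}[p^{n_0-1}](\cO)$ with $m\ge n_0$ necessarily has $P\notin\widehat{\cA}(\cO^{(1)})$, which is the assertion. The main obstacle I anticipate is the passage from the inverse-limit statement to the uniform threshold: one must be careful that ``finitely many layers'' genuinely suffices, i.e. that the $\cO^{(1)}$-obstruction, once it appears at some finite level, persists and grows — this is where the precise properties of $\Omega$ and of the valuation-theoretic function $\delta$ (alluded to in \autoref{defn:delta} and \autoref{lemma:propertiesofdelta}) enter, controlling how the ``defect'' $d(P)\in\Omega$ of a torsion point of order $p^k$ behaves as $k$ increases, and ensuring it cannot vanish for all large $k$ unless $P$ is itself torsion of bounded order.
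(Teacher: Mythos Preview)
Your overall strategy is correct and matches the paper's: both reduce Theorem~A to the vanishing $T_p\bigl(\widehat{\cA}(\cO^{(1)})\bigr)=0$, and then pass to a finite threshold $n_0$ by a compactness argument on the inverse system of finite groups $\widehat{\cA}[p^n](\cO^{(1)})$. The paper in fact leaves this last step entirely implicit (it simply says ``Combining this with the fact that $T_p\bigl(\widehat{\cA}(D_f)\bigr)=0$, we have the following result''), so your explicit diagonal argument is a useful addition.

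Where the two arguments genuinely diverge is in \emph{how} one obtains $T_p\bigl(\widehat{\cA}(\cO^{(1)})\bigr)=0$ from \cite{IMZ:Fontaine}. Your route is the direct one: if $\underline{u}=(u_n)\in T_p(\widehat{\cA})$ has every $u_n\in\widehat{\cA}(\cO^{(1)})$, then writing an invariant differential in formal coordinates as $\omega=\sum_i h_i(X)\,dX_i$ one sees $u_n^*(\omega)=\sum_i h_i(x)\,dx_i=0$ since each coordinate $x_i\in\ker(d)$; hence $\varphi_{\cA}(\underline{u})=0$, and injectivity of $\varphi_{\cA}$ on $T_p(\widehat{\cA})$ forces $\underline{u}=0$. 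The paper instead takes a more structural detour: it reinterprets $\varphi_{\cA}$ via Wintenberger's construction to get $T_p\bigl(\widehat{\cA}(D_f)\bigr)=0$ for Fontaine's ring $D_f$, and then proves two lemmas (one using the splitting of $D_f\to\cO$ over $\cO^{(1)}$, the other a Jacobian-determinant argument exploiting smoothness of $\widehat{\cA}[p^m]$ over the generic fibre) to identify $\widehat{\cA}[p^n](D_f)\cong\widehat{\cA}[p^n](\cO^{(1)})$ level by level. Your argument is shorter for this theorem alone; the paper's buys a finer level-wise comparison with $D_f$-points and ties the result to the snake-lemma picture of the Fontaine integral.

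Two minor corrections to your write-up. First, the $G_F$-equivariance worry is unnecessary: the injectivity of $\varphi_{\cA}$ on $T_p(\widehat{\cA})$ (Theorem~5.5 of \cite{IMZ:Fontaine}) holds for \emph{all} elements, not only Galois-stable ones, so you can drop that discussion entirely. Second, the compactness step needs only that each $\widehat{\cA}[p^n](\cO^{(1)})$ is finite (as a subset of the finite group $\widehat{\cA}[p^n](\cO)$); you do not need $\cO^{(1)}$ to be closed in $\cO$, nor do you need the function $\delta$ or any growth estimate on $d(P)$. Your final paragraph can therefore be deleted: once $T_p\bigl(\widehat{\cA}(\cO^{(1)})\bigr)=0$ is established, the existence of $n_0$ is purely formal.
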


For a more concretely description of what this means in terms of the coordinates of the torsion point $P$, we refer the reader to  \autoref{remark:ram}.

Our second result gives conditions on $\widehat{\cA}$ for which one may take $n_0 = 1$ in \autoref{xthm:main1}. 
More precisely, we describe a condition on a formal group $\sF$ of dimension $g$ over $\Spf(\cO_K)$ which implies that $0\neq P = (x_1,\dots,x_g) \in \sF[p](\cO)$, the field of definition $K(P)/K$ is tamely ramified. 
The condition is discussed in Section \ref{sec:ramification} and is related to a symmetric formal group law from \cite{arias}. 

\begin{thmx}\label{xthm:tamelyramifiedtorsion}
Let $\sF$ be a strict (\autoref{defn:strict}) formal group of dimension $g$ over $\Spf(\mathcal{O}_K)$.
For $0\neq P = (x_1,\dots,x_g) \in \sF[p](\cO)$, the field of definition $K(P)/K$ is tamely ramified and $\cO_{K(P)} \cong \cO_K[x_1,\dots,x_g]$. 
Moreover, $K(\sF[p])/K$ is tamely ramified. 
\end{thmx}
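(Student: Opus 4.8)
Throughout, recall that the residue field of $K$ is the algebraically closed field $\overline{\F}_p$, so every finite extension of $K$ is totally ramified, and a finite extension $L/K$ is tamely ramified if and only if $p\nmid [L:K]$, equivalently if and only if $L\subset K^{\mathrm{t}}$, the maximal tamely ramified extension inside $\overline{K}$. Granting this, the statement decouples: once we know $K(P)/K$ is tame and that one of the coordinates $x_i$ is a uniformizer of $K(P)$, the chain $\cO_K[x_i]\subseteq\cO_K[x_1,\dots,x_g]\subseteq\cO_{K(P)}=\cO_K[x_i]$ forces equality throughout, giving $\cO_{K(P)}=\cO_K[x_1,\dots,x_g]$; and since $\sF[p](\cO)$ is finite, $K(\sF[p])$ is the compositum of the finitely many extensions $K(P)$, hence a finite subextension of $K^{\mathrm{t}}/K$ once each $K(P)/K$ is tame, hence itself tamely ramified. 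So everything reduces to: for each $0\neq P=(x_1,\dots,x_g)\in\sF[p](\cO)$, the extension $K(P)/K$ is tame, $K(P)=K(x_i)$ for every $i$ with $x_i\neq 0$, and $v(x_i)=1/c$ for an explicit integer $c=c(\sF)$ coprime to $p$.

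The plan is to prove this by a Newton-polygon analysis of the $[p]$-division equations, the role of the strictness hypothesis (\autoref{defn:strict}, i.e.\ the symmetric formal group law of \cite{arias} discussed in Section~\ref{sec:ramification}) being precisely to keep the relevant Newton polygon a single segment. Write $[p]_\sF(\mathbf X)=p\,\mathbf X+(\text{terms of degree}\ge 2)\in\cO_K[[X_1,\dots,X_g]]^{\oplus g}$. Since $K/\QQ_p$ is absolutely unramified we have $v(p)=1$; this is the one place the unramifiedness of the base enters. Using the symmetric structure to pass to a one-variable analogue of the $[p]$-division polynomial — concretely, restricting $[p]_\sF$ to the distinguished one-parameter formal curve through the origin provided by strictness, or eliminating $X_2,\dots,X_g$ in the controlled manner that strictness permits — one obtains a distinguished (Weierstrass) polynomial $g_1(X)\in\cO_K[X]$ of degree $c$, with $g_1(0)=p\cdot(\text{unit})$, having $x_1$ as a root. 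Being distinguished of degree $c$ with $v(g_1(0))=1$, the Newton polygon of $g_1$ is automatically the single segment from $(0,1)$ to $(c,0)$, of slope $-1/c$; since $\gcd(1,c)=1$ this forces $g_1$ to be irreducible over $K$, so $[K(x_1):K]=c$, every root of $g_1$ has valuation $1/c$, and $x_1$ is a uniformizer of $K(x_1)$. The point that strictness must also deliver here is that $c$ is prime to $p$ (for an arbitrary $g$-dimensional formal group $c$ could be divisible by $p$, producing a wildly ramified extension).

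It remains to see that $K(P)=K(x_1)$, i.e.\ that every nonzero coordinate $x_j$ lies in $K(x_1)$ and has valuation $1/c$; again this is forced by the symmetry, which makes the coordinates of a fixed torsion point generate the same field over $K$ (for instance because they are, up to units and higher-order terms, $K$-rational functions of $x_1$). Granting this, $K(P)=K(x_1)$ is totally ramified of degree $c$ prime to $p$, hence tamely ramified, with uniformizer $x_1$, so $\cO_{K(P)}=\cO_K[x_1]=\cO_K[x_1,\dots,x_g]$. Finally, as observed above, $K(\sF[p])/K$ is the compositum of the tame extensions $K(P)$ and is therefore itself tamely ramified.

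The main obstacle is the reduction to a single variable. One must genuinely use the precise shape of the strict (symmetric) formal group law to produce the one-variable distinguished polynomial $g_1$ of degree $c$ coprime to $p$ with $g_1(0)$ of valuation exactly $1$ — in particular to rule out the \emph{mixed} lower-order monomials in $\overline{[p]}_\sF\pmod{\mathfrak m_K}$ that for a general $g$-dimensional formal group would introduce extra slopes in the Newton polygon and could render $K(\sF[p])/K$ wildly ramified — and to show that a single coordinate already generates $K(P)$. Once those two inputs are extracted from \autoref{defn:strict}, the rest is the classical Lubin–Tate-type Newton-polygon argument, which simultaneously pins down the ramification index $c$ and the ring of integers.
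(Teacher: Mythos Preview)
Your proposal has a genuine gap at exactly the place you flag as ``the main obstacle'': the reduction to a single variable. You treat strictness as if it supplied a distinguished one-parameter formal curve, or a symmetry allowing elimination of $X_2,\dots,X_g$ and forcing each $x_j$ to be a $K$-rational function of $x_1$. It does neither. Looking back at \autoref{defn:strict}, strictness says only that the leading unit-coefficient forms $F_1,\dots,F_g$ of the components of $[p]$ all have the same degree $d$ (a $p$-power), and that their reductions $G_1,\dots,G_g$ have no common nonzero zero over $\overline{\F}_p$. This is a condition on the common zero locus of $g$ forms in $\mathbb{P}^{g-1}_{\overline{\F}_p}$; it does not single out a curve, does not let you solve for $x_2,\dots,x_g$ in terms of $x_1$, and in particular does not by itself produce a one-variable Weierstrass polynomial with $x_1$ as a root. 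Without that reduction, the Newton-polygon step never gets off the ground, and your subsequent claims that $K(P)=K(x_1)$ and that the coordinates are ``up to units and higher-order terms, $K$-rational functions of $x_1$'' are unjustified.

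The paper's argument is quite different in structure. Rather than reduce to one variable, it first makes a carefully chosen $\cO_K$-linear change of coordinates (\autoref{lemma:niceisom}) so that the new coordinates $z_1^*,\dots,z_g^*$ of $P$ simultaneously realize the minimal valuation $\min_i v(x_i)$ \emph{and} the minimal distance $\min_i v(x_i-\sigma(x_i))$ to every Galois conjugate; this linear change preserves strictness. Since all $z_i^*$ share the same positive valuation, the vector $(z_1^*/z_g^*,\dots,1)$ reduces to a nonzero point of $\overline{\F}_p^{\,g}$, and strictness is used exactly once: it guarantees some $G_j$ does not vanish there, so $v(F_j(z_1^*,\dots,z_g^*))=d\cdot v(z_j^*)$. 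Balancing this against the linear term $pz_j^*$ in the $j$-th component of $[p](z^*)=0$ gives $v(z_j^*)=1/(d-1)$. A separate computation, using the minimality property~(3) of the $z_i^*$, shows $v(z_j^*-\sigma(z_j^*))=v(z_j^*)$ for every $\sigma$. Finally, Krasner's lemma applied in both directions against the Eisenstein polynomial $p+uZ^{d-1}$ yields $K(P)=K(z_j^*)=K(\theta)$ with $[K(\theta):K]=d-1$, which is prime to $p$ since $d$ is a $p$-power. The tame degree, the uniformizer, and the ring of integers all fall out at once---but via a multivariable valuation-and-Krasner argument, not a one-variable Newton polygon.
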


\subsection{Related results}
In \cite[Section 1]{Serre:PropertiesGaloisEllipticCurves}, Serre showed that for $E/\mQ_p$ an elliptic curve with good supersingular reduction, the field extension $\mQ_p(E[p])/\mQ_p$ is tamely ramified, and his proof relies on a detailed study of the formal group attached to $E$. In particular, Serre explicitly determined the $p$-adic valuation of the points on $E[p]$ (note that when $E/\mQ_p$ has good supersingular reduction we have that $E[p]\cong \widehat{\cE}[p]$ where $\widehat{\cE}$ is the formal group of the N\'eron model of $E$), which allowed him to embed $E[p]$ into a certain vector space on which the wild inertia group acts trivially.

In her thesis, Arias-de-Reyna generalized this approach, and in \cite[Theorem 3.3]{arias}, she showed that if there exists a positive rational number such that for all $0\neq P = (x_1,\dots,x_g) \in \widehat{\cA}[p](\cO)$, the minimum of $p$-adic valuation of the coordinates of $P$ equals $\alpha$, then the action of wild inertia on $\widehat{\cA}[p]$ is trivial, and so $K(\widehat{\cA}[p])/K$ is tamely ramified. 
She goes on to define the notion of a symmetric formal group law on a formal group of dimension 2, and then proves \cite[Theorem 4.15]{arias} that if formal group of dimension 2 has a symmetric formal group law and height 4, then the above statement about the $p$-adic valuation of the $p$-torsion points holds. 
Later in \cite[Theorem 5.9]{arias}, she identifies a family of genus 2 curves whose Jacobians have associated formal groups with a symmetric formal group law and height 4, and hence their $p$-torsion defines a tamely ramified extension.  We also mention work of Rosen and Zimmerman \cite{RosenZimmerman:TorsionFormal}, in which the authors study the Galois group of $K(\sF[p^n])$ where $\sF$ is a generic commutative formal group of dimension 1 and height $h$. 

In the global setting i.e., when working over a number field $F/\mQ$, Coleman \cite{coleman1987ramified} studied the ramification properties of torsion points on abelian varieties in relation to the Manin--Mumford conjecture. 
More precisely, he conjectured (\textit{loc.~cit.~}Conjecture B) that for a smooth, projective, geometrically integral curve $C/F$ and any Galois stable torsion packet $T$ in $C(\overline{F})$, the field $F(T)/F$ is unramified at a certain prime $\mathfrak{p}$ of $F$. 
Coleman proved this conjecture when $\mathfrak{p}$ is large enough, and using work of Bogomolov, he provided a new proof of the Manin--Mumford conjecture. 

We conclude by noting that our \autoref{xthm:tamelyramifiedtorsion} generalizes \cite[Theorem 4.15]{arias} and theoretically provides examples of abelian varieties of arbitrary dimension for which the extension $K(\widehat{\cA}[p])/K$ is tamely ramified.

\subsection{Outline of paper}
In Section \ref{sec:Fontaine}, we recall the definition of the Fontaine integral, our previous work on the kernel of the Fontaine integral \cite{IMZ:Fontaine}, and a different perspective on the Fontaine integral via work of Wintenberger.  
In Section \ref{sec:differentperspective}, we prove \autoref{xthm:main1}. 
We conclude in Section \ref{sec:ramification} with the definition of a strict formal group and our proof of \autoref{xthm:tamelyramifiedtorsion}. 

 \subsection{Conventions}\label{subsec:conventions}
We establish the following notations and conventions throughout the paper.

\subsubsection*{Fields}
Fix a rational prime $p>2$.  
Let $K$ denote the completion of maximal unramified extension of $\Q_p$, let $\overline{K}$ be a fixed algebraic closure of $K$, and let $\C_p$ denote the completion of $\overline{K}$ with respect to the unique extension $v$ of the $p$-adic valuation on $\Q_p$ (normalized such that $v(p) = 1$). 
For a tower of field extensions $\Q_p\subset F\subset K$, we denote by $G_K$ and respectively $G_F$ the absolute Galois groups of $K$ and $F$ respectively. 
We denote $\cO:=\cO_{\Kbar}$.

\subsubsection*{Abelian varieties}
We will consider an abelian variety $A$ defined over some  subfield $F\subset K$ such that $[F:\Q_p]<\infty$, with good reduction over 
$F$.
Let $\cA$ denote the N\'eron model of $A$ over ${\rm Spec}(\cO_F)$ and also denote by $\widehat{\cA}$ the formal completion of $\cA$ along the identity of its special fiber, i.e. the formal group of $A$.
We note that the formation of N\'eron models commutes with unramified base change. 
We will denote the Tate module of $A$ (resp.~the N\'eron model $\cA$ of $A$) by $T_p(A)$ (resp.~$T_p(\cA)$).
We note that $T_p(A)\cong T_p(\cA)$ as $G_F$-modules.

\subsubsection*{Formal groups}
We will let $\sF$ denote a formal group over $\Spf(\cO_K)$. 
Recall that $\widehat{\cA}$ is a formal group of dimension $\dim (A)$ and of height $h$ which satisfies $\dim(A) \leq h \leq 2\dim(A)$. 
We refer the reader to \cite{Hazewinkel:FormalGroups} for an extensive treatment of formal groups and to \cite[Chapter 4.2 and Chapter 5]{AriasDeReyna:Thesis} for a more concise treatment. 

 \section{\bf Fontaine integration for abelian varieties with good reduction}\label{sec:Fontaine}
In this section, we recall the construction of the Fontaine integration as well as our previous work concerning the kernel of the Fontaine integral.

\subsection*{The differentials of the algebraic integers}
First, we recall for the reader's convenience the notation established above. 
Let $K$ denote the maximal unramified extension of $\Q_p$, let $\overline{K}$ be a fixed algebraic closure of $K$, and let $\C_p$ denote the completion of $\overline{K}$. 
Let $G_K$ denote the absolute Galois group of $K$. 
We denote $\cO:=\cO_{\Kbar}$. Fix a finite extension $F$ of $\Q_p$ in $K$.
For a $G_K$-representation $V$, the $n$-th Tate twist of $V$ is denoted by $V(n)$, which is just the tensor product of $V$ with the $n$-fold product of the $p$-adic cylcotomic character $\mQ_p(1)$. 

In \cite{fontaine:differentials}, Fontaine studied a fundamental object related to these choices, namely the $\cO$-module $\Omega:=\Omega^1_{\cO/\cO_K}\cong \Omega^1_{\cO/\cO_F}$ of K\"ahler differentials of $\cO$ over $\cO_K$, or over $\cO_F$. 
The $\cO$-module $\Omega$ is a torsion and $p$-divisible $\cO$-module, with a semi-linear action of $G_F$. 
Let $d\colon\cO\to \Omega$ denote the canonical derivation, which is surjective.

Important examples of algebraic differentials arise as follows:~Let $(\varepsilon_n)$ denote a compatible sequence of primitive $p$th roots of unity in $\overline{K}$. 
Then 
\[
\frac{d\varepsilon_n}{\varepsilon_n} = d (\log \varepsilon_n) \in \Omega \quad \mbox{ and } \quad  p\left(\frac{d\varepsilon_{n+1}}{\varepsilon_{n+I}}\right)=\frac{d\varepsilon_n}{\varepsilon_n}. 
\]

Next, we recall a theorem of Fontaine.

\begin{theorem}[\protect{\cite[Th\'eor\`eme 1']{fontaine:differentials}}]\label{thm:Fontainedifferentials}
Let $(\varepsilon_n)$ denote a compatible sequence of primitive $p$th roots of unity in $\overline{K}$. 
For $\alpha \in K$, write $\alpha = a/p^r$ for some $a\in \mathcal{O}$. 
The morphism $\xi\colon \overline{K}(1) \to \Omega$ defined by 
\[
\xi(\alpha \otimes (\varepsilon_n)_n) = a \frac{d\varepsilon_r}{\varepsilon_r}
\]
is surjective and $G_K$-equivariant with kernel 
\[
\ker(\xi) = \underline{a}_{K} \coloneqq \brk{x\in \overline{K} : v(x) \geq - \frac{1}{p-1}}.
\]
Moreover, $\Omega\cong{\overline{K}(1)}/{\underline{a}_K(1)} \cong (\overline{K}/\underline{a}_K)(1)$ and $V_p(\Omega) = \Hom_{\mZ_p}(\mQ_p,\Omega) \cong \mC_p(1)$.
\end{theorem}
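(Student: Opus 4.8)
The plan is to exhibit $\Omega$ as a filtered colimit over the finite subextensions of $\overline{K}/K$ and to reduce everything to the structure theory of Kähler differentials of complete discrete valuation rings together with the multiplicativity of differents in towers. First I would record the reductions. Since the residue field of $K$ is algebraically closed, every finite $L/K$ inside $\overline{K}$ is totally ramified, so $\mathcal{O}_L=\mathcal{O}_K[\pi_L]$ for a uniformizer $\pi_L$ with minimal polynomial $f_L$, and hence $\Omega^1_{\mathcal{O}_L/\mathcal{O}_K}\cong\mathcal{O}_L/\mathfrak{d}_{L/K}$, cyclic with generator $d\pi_L$ and annihilator the different $\mathfrak{d}_{L/K}=(f_L'(\pi_L))$. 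As $\mathcal{O}=\varinjlim_L\mathcal{O}_L$ and $\Omega^1$ over $\mathcal{O}_K$ commutes with filtered colimits of the source, $\Omega=\varinjlim_L\mathcal{O}_L/\mathfrak{d}_{L/K}$.

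The technical heart is the behaviour of the transition maps. For $L\subseteq M$ write $\pi_L=h(\pi_M)$, so that $d\pi_L=h'(\pi_M)\,d\pi_M$ inside $\Omega^1_{\mathcal{O}_M/\mathcal{O}_K}\cong\mathcal{O}_M/\mathfrak{d}_{M/K}$. The second fundamental exact sequence
\[
\mathcal{O}_M\otimes_{\mathcal{O}_L}\Omega^1_{\mathcal{O}_L/\mathcal{O}_K}\longrightarrow\Omega^1_{\mathcal{O}_M/\mathcal{O}_K}\longrightarrow\Omega^1_{\mathcal{O}_M/\mathcal{O}_L}\longrightarrow 0,
\]
together with the tower formula $\mathfrak{d}_{M/K}=\mathfrak{d}_{M/L}\mathfrak{d}_{L/K}\mathcal{O}_M$, identifies the image of the first map with $\mathfrak{d}_{M/L}\mathcal{O}_M/\mathfrak{d}_{M/K}$; comparing valuations in the quotient of a discrete valuation ring $\mathcal{O}_M/\mathfrak{d}_{M/K}$ then forces $v(h'(\pi_M))=v(\mathfrak{d}_{M/L})$ whenever $L\neq K$. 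Two consequences: the transition map $\Omega^1_{\mathcal{O}_L/\mathcal{O}_K}\to\Omega^1_{\mathcal{O}_M/\mathcal{O}_K}$ is injective, and the class of $d\pi_L$ in $\mathcal{O}_M/\mathfrak{d}_{M/K}$ has valuation exactly $v(\mathfrak{d}_{M/L})$. Specializing to $L=K(\varepsilon_r)$ — where $\varepsilon_r-1$ is a uniformizer so $\mathcal{O}_{K(\varepsilon_r)}=\mathcal{O}_K[\varepsilon_r-1]$, the differential $\tfrac{d\varepsilon_r}{\varepsilon_r}$ is a unit multiple of $d(\varepsilon_r-1)$ hence a generator of $\Omega^1_{\mathcal{O}_{K(\varepsilon_r)}/\mathcal{O}_K}$, and $v(\mathfrak{d}_{K(\varepsilon_r)/K})=r-\tfrac{1}{p-1}$ by a direct local computation — one deduces that the annihilator of $\tfrac{d\varepsilon_r}{\varepsilon_r}$ in $\Omega$ is exactly $\{x\in\overline{K}:v(x)\geq r-\tfrac{1}{p-1}\}$.

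Granting this, the theorem assembles quickly. Well-definedness and $G_K$-equivariance of $\xi$ are formal, using $p\,\tfrac{d\varepsilon_{n+1}}{\varepsilon_{n+1}}=\tfrac{d\varepsilon_n}{\varepsilon_n}$, that $G_K$ fixes $p$, and that $G_K$ acts on $\tfrac{d\varepsilon_r}{\varepsilon_r}$ through the cyclotomic character. The kernel computation is precisely the annihilator statement above: $\xi(\tfrac{a}{p^r}\otimes(\varepsilon_n))=a\,\tfrac{d\varepsilon_r}{\varepsilon_r}$ vanishes iff $v(a)\geq r-\tfrac{1}{p-1}$, i.e.\ iff $\tfrac{a}{p^r}\in\underline{a}_K$. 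For surjectivity, given a finite $L/K$ I would pick $M=L(\varepsilon_r)$ with $r$ so large that $v(\mathfrak{d}_{K(\varepsilon_r)/K})\geq v(\mathfrak{d}_{L/K})$; then in $\mathcal{O}_M/\mathfrak{d}_{M/K}$ the class of $d\pi_L$ has valuation $v(\mathfrak{d}_{M/L})=v(\mathfrak{d}_{M/K})-v(\mathfrak{d}_{L/K})$, which is at least the valuation $v(\mathfrak{d}_{M/K})-v(\mathfrak{d}_{K(\varepsilon_r)/K})$ of $\tfrac{d\varepsilon_r}{\varepsilon_r}$, so $d\pi_L=b\,\tfrac{d\varepsilon_r}{\varepsilon_r}=\xi(\tfrac{b}{p^r}\otimes(\varepsilon_n))$ for some $b\in\mathcal{O}_M\subseteq\mathcal{O}$; since $\Omega$ is covered by such classes, $\xi$ is onto. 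The isomorphisms $\Omega\cong\overline{K}(1)/\underline{a}_K(1)\cong(\overline{K}/\underline{a}_K)(1)$ are then just a restatement, and $V_p(\Omega)\cong\mC_p(1)$ follows by passing to the inverse limit of $\xi$ along multiplication by $p$ and using that the $p$-adic completion of $\overline{K}$ is $\mC_p$.

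The main obstacle is the transition-map analysis of the second paragraph: extracting the identity $v(h'(\pi_M))=v(\mathfrak{d}_{M/L})$ from the fundamental exact sequence and the different tower formula is what simultaneously delivers injectivity of the colimit system, the exact annihilator of the cyclotomic differentials (hence $\ker\xi$), and the cofinality that powers the surjectivity argument. By contrast, the cyclotomic different identity $v(\mathfrak{d}_{K(\varepsilon_r)/K})=r-\tfrac{1}{p-1}$ and the formal verifications for $\xi$ are routine.
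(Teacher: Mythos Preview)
The paper does not supply its own proof of this statement: it is quoted verbatim as \cite[Th\'eor\`eme~1$'$]{fontaine:differentials} and used as a black box, so there is nothing in the paper to compare your argument against. Your sketch is a faithful outline of Fontaine's original proof --- writing $\Omega$ as the filtered colimit of the cyclic modules $\mathcal{O}_L/\mathfrak{d}_{L/K}$, controlling the transition maps via the tower formula for differents, and using the explicit value $v(\mathfrak{d}_{K(\varepsilon_r)/K})=r-\tfrac{1}{p-1}$ to pin down the annihilator of $d\varepsilon_r/\varepsilon_r$ and hence $\ker\xi$. The surjectivity step (choosing $r$ large enough that the cyclotomic different dominates $v(\mathfrak{d}_{L/K})$) and the passage to $V_p(\Omega)\cong\mathbb{C}_p(1)$ are likewise the standard moves. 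I see no gap in the strategy; the only place where one should be slightly more careful in a full write-up is the claim that the annihilator of $d\varepsilon_r/\varepsilon_r$ computed at the finite level $K(\varepsilon_r)$ persists unchanged in the colimit, but this is exactly what your injectivity-of-transition-maps argument delivers.
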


\autoref{thm:Fontainedifferentials} implies the following:
\[
T_p(\Omega)\otimes_{\Z_p}\Q_p:=\left(\varprojlim_n \Omega[p^n]\right) \otimes_{\Z_p}\Q_p\cong \left(\varprojlim\left(\Omega\stackrel{p}{\leftarrow}\Omega\stackrel{p}{\leftarrow}\cdots\stackrel{p}{\leftarrow}\Omega\cdots\right)\right)\otimes_{\Z_p}\Q_p\cong\C_p(1)
\]
as $G_F$-modules.

We denote by $\cO^{(1)}:=\ker(d)$, the kernel of $d$, which is an $\cO_K$-sub-algebra of $\cO$. 
Indeed, if $a,b\in \cO^{(1)}$, then $d(ab)=ad(b)+bd(a)=0$, and so $ab\in \cO^{(1)}$. 
In order to better understand $\cO^{(1)}$, we recall a construction from the first and last author \cite{iovita_zaharescu}. 

\begin{definition}\label{defn:delta}
Let $a\in \cO$. Let $L/K$ be a finite extension which contains $a$, let $\pi$ be a uniformizer of $L$, and let $f\in \cO_K[x]$ be such that $a = f(\pi)$. Then, define 
\[
\delta(a) \coloneqq \min \left( v\left( \frac{f'(\pi)}{\mathcal{D}_{L/K}}\right),0 \right) 
\]
where $\mathcal{D}_{L/K}$ denotes the different ideal of $L/K$. 
Note that $\delta$ does not depend on $\pi$, $f$, or $F$, and so it defines a function $\delta\colon \cO \to (-\infty,0]$. 
\end{definition}

\begin{lemma}[Properties of $\delta$]\label{lemma:propertiesofdelta}
The function $\delta$ from \autoref{defn:delta} satisfies the following properties.
\begin{enumerate}
\item If $a,b\in \cO$, then $\delta(a + b) \geq \min(\delta(a),\delta(b))$, and if $\delta(a) \neq \delta(b)$, then we have equality.
\item If $a,b\in \cO$, then $\delta(ab) \geq \min(\delta(a) + v(b),\delta(b) + v(a))$. 
\item If $f\in \cO_K[x]$ and $\alpha \in \cO$, then $\delta(f(\alpha)) = \min(v(f'(\theta)) + \delta(\theta) ,0)$.
\item If $x,y \in $, then $xdy = 0$ if and only if $v(x) + \delta(y) \geq 0$.
\item For $a\in \cO$, $\delta(a) = 0$ if and only if $a\in \cO^{(1)}$. 
\item The formula $\delta(a db) \coloneqq \min(v(a) + \delta(b),0)$ is well-defined and give a map $\delta\colon \Omega \to (-\infty,0]$, which makes the obvious diagram commutative. 
\end{enumerate}
\end{lemma}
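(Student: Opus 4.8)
The plan is to verify each property by reducing to a single finite extension $L/K$ in which all the relevant elements lie, and then to compute with uniformizers and the formula in \autoref{defn:delta}. The key technical tool throughout is the behavior of the different: if $L'/L/K$ is a tower with $\pi'$ a uniformizer of $L'$ and $\pi$ a uniformizer of $L$, then $\mathcal{D}_{L'/K} = \mathcal{D}_{L'/L}\cdot\mathcal{D}_{L/K}$, and $v(\mathcal{D}_{L'/L})$ can be computed from the minimal polynomial of $\pi'$ over $L$. First I would prove the independence claim asserted in \autoref{defn:delta} (that $\delta(a)$ does not depend on the choices of $\pi$, $f$, or $L$): given two presentations $a = f(\pi) = g(\varpi)$, pass to a common extension containing both $\pi$ and $\varpi$, write one uniformizer as a polynomial in the other, and use the chain rule together with multiplicativity of the different to see that $v(f'(\pi)) - v(\mathcal{D}_{L/K})$ is unchanged; enlarging $L$ further multiplies numerator and denominator of $f'(\pi)/\mathcal{D}_{L/K}$ by the same factor $v(\mathcal{D}_{L'/L})$, so the value is stable. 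This independence is what makes all subsequent manipulations legitimate.

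For the individual items: (3) is the heart of the lemma, so I would prove it first. Given $f \in \cO_K[x]$ and $\alpha \in \cO$, choose $L$ containing $\alpha$, a uniformizer $\pi$, and $h \in \cO_K[x]$ with $\alpha = h(\pi)$; then $f(\alpha) = (f\circ h)(\pi)$, and $(f\circ h)'(\pi) = f'(\alpha)\cdot h'(\pi)$ by the chain rule, whence $\delta(f(\alpha)) = \min\bigl(v(f'(\alpha)) + v(h'(\pi)) - v(\mathcal{D}_{L/K}),\, 0\bigr) = \min\bigl(v(f'(\alpha)) + \delta(\alpha),\, 0\bigr)$, where I am using $\delta(\alpha) = \min(v(h'(\pi)) - v(\mathcal{D}_{L/K}),0)$ and the fact that when $\delta(\alpha) = 0$ the extra $\min$ with $0$ on the outside absorbs everything correctly (here one uses $v(f'(\alpha)) \ge 0$ since $f \in \cO_K[x]$). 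With (3) in hand, (1) and (2) follow by choosing a single $L$ containing both $a$ and $b$ with a common uniformizer $\pi$, writing $a = f(\pi)$, $b = g(\pi)$, so $a+b = (f+g)(\pi)$ and $ab = (fg)(\pi)$; then $(f+g)' = f' + g'$ and the non-archimedean inequality $v(f'(\pi)+g'(\pi)) \ge \min(v(f'(\pi)),v(g'(\pi)))$ with equality when the valuations differ gives (1) after dividing by $\mathcal{D}_{L/K}$ and taking $\min$ with $0$; the Leibniz rule $(fg)' = f'g + fg'$ together with $v(g(\pi)) = v(b)$, $v(f(\pi)) = v(a)$ gives (2). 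For (5), $\delta(a) = 0$ means exactly that $v(f'(\pi)/\mathcal{D}_{L/K}) \ge 0$, i.e. $f'(\pi) \in \mathcal{D}_{L/K}$; by the classical description of the different via the trace pairing (or via Fontaine's results recalled above), this is equivalent to $da = f'(\pi)\,d\pi = 0$ in $\Omega$, i.e. $a \in \cO^{(1)}$ — alternatively I would cite \cite{iovita_zaharescu} where this is presumably established.

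For (4), the displayed statement ``if $x,y \in$'' is missing its set (it should read $x,y \in \cO$); granting that, $x\,dy = x f'(\pi)\,d\pi$ for $y = f(\pi)$, and by the same different-theoretic computation as in (5), $x f'(\pi)\,d\pi = 0$ in $\Omega$ iff $v(x) + v(f'(\pi)) - v(\mathcal{D}_{L/K}) \ge 0$; since this holds automatically when $v(x) \ge 0$ and $\delta(y) = 0$, and otherwise reduces to $v(x) + \delta(y) \ge 0$, we get the claim (one checks the two cases $\delta(y) = 0$ and $\delta(y) < 0$ separately, noting $v(x) \ge 0$ always). Finally (6): using Fontaine's presentation $\Omega \cong (\Kbar/\underline{a}_K)(1)$, every element of $\Omega$ is a finite sum of terms $a\,db$, and I would show that if $a\,db = a'\,db'$ in $\Omega$ then $\min(v(a)+\delta(b),0) = \min(v(a')+\delta(b'),0)$ by applying (4) to the difference: $a\,db - a'\,db' = 0$ forces the valuations to agree via items (1)–(4). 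Then $\delta(a\,db) := \min(v(a)+\delta(b),0)$ is well-defined, extends additively (using (1)) to a map $\Omega \to (-\infty,0]$, and the square
\[
\begin{CD}
\cO @>d>> \Omega \\
@V\delta VV @VV\delta V \\
(-\infty,0] @= (-\infty,0]
\end{CD}
\]
commutes because $\delta(da) = \delta(1\cdot da) = \min(\delta(a),0) = \delta(a)$.

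The main obstacle I anticipate is item (5) (and the identical computation underlying (4) and (6)): pinning down precisely the equivalence between $f'(\pi) \in \mathcal{D}_{L/K}$ and the vanishing of $f'(\pi)\,d\pi$ in $\Omega$. This is where one genuinely needs input beyond formal manipulation — either Fontaine's theorem on the structure of $\Omega$ (the exact sequence describing $\Omega$ in terms of the different of finite subextensions), or a direct appeal to \cite{iovita_zaharescu}. Everything else is bookkeeping with the chain rule and multiplicativity of the different, taking care in each item to handle the outer $\min(-,0)$ correctly when one of the $\delta$-values is $0$ (this is the only place the sign conventions bite), but I would expect the referee to want the different-theoretic step spelled out carefully or cited precisely.
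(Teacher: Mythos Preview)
The paper does not actually give a proof of this lemma: it is stated as a list of properties and left unproved, with the surrounding results (\autoref{lemma:relateddelta} and \autoref{prop:deeplyramified}) attributed to \cite{iovita_zaharescu}, so presumably these properties are also taken from that reference. There is therefore no ``paper's proof'' to compare against; your proposal supplies a direct argument where the paper simply cites or asserts.

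Your approach---reduce to a single finite extension $L/K$, use the chain rule and multiplicativity of the different, and handle the outer $\min(-,0)$ by case analysis---is correct and is the standard way to establish these facts. Proving (3) first and deducing (1), (2) from it is efficient; the identification of (5) with the classical statement $\Omega^1_{\cO_L/\cO_K} \cong \cO_L/\mathcal{D}_{L/K}$ (via $d\pi \mapsto 1$) together with the injectivity of $\Omega^1_{\cO_L/\cO_K} \to \Omega^1_{\cO_{L'}/\cO_K}$ for $L \subset L'$ is exactly right, and this is indeed the one place requiring genuine input beyond formal manipulation. You also correctly spot the typos in (3) (the stray $\theta$ should be $\alpha$) and in (4) (the missing $\cO$).

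One small gap: in (6) your well-definedness argument says ``apply (4) to the difference $a\,db - a'\,db'$'', but that difference is not literally of the form $x\,dy$, so (4) does not apply directly. The fix is straightforward and in the spirit of everything else you wrote: pass to a common $L$ with uniformizer $\pi$, write $b = f(\pi)$ and $b' = g(\pi)$, so that $a\,db - a'\,db' = (af'(\pi) - a'g'(\pi))\,d\pi$; then the vanishing of this element means $v(af'(\pi) - a'g'(\pi)) \ge v(\mathcal{D}_{L/K})$, and a short ultrametric case analysis shows $\min(v(a)+\delta(b),0) = \min(v(a')+\delta(b'),0)$. With that adjustment the proposal is complete.
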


We will use the follow properties of $\delta$ in our study of the Fontaine integral. 

\begin{lemma}[\protect{\cite[Lemma 2.2]{iovita_zaharescu}}]\label{lemma:relateddelta}
Let $a,b \in \cO$ be such that $\delta(a)\leq \delta(b)$. Then there exists $c\in \cO_{K[a,b]}$ such that $c d a = d b$. 
\end{lemma}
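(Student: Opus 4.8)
The plan is to prove \autoref{lemma:relateddelta} by reducing to a statement about a single finite extension of $K$ containing both $a$ and $b$, and then using the explicit description of $\delta$ in terms of the different via \autoref{defn:delta} together with part (3) of \autoref{lemma:propertiesofdelta}. First I would choose a finite extension $L/K$ with $a,b\in L$; let $\pi$ be a uniformizer of $L$ and write $L = K[\theta]$ for some $\theta$ (one may take $\theta = \pi$ after enlarging $L$ if necessary, or argue with a chosen generator and express $a,b$ as polynomials in it). Write $a = f(\theta)$ and $b = g(\theta)$ with $f,g \in \cO_K[x]$. By part (3) of \autoref{lemma:propertiesofdelta}, $\delta(a) = \min(v(f'(\theta)) + \delta(\theta), 0)$ and similarly for $b$ with $g$; the key point is that the "curvature" $\delta(\theta)$ (essentially $-v(\cD_{L/K})$ up to the truncation at $0$, since $f'(\theta)$ can be taken to be a uniformizer-free generator) is the \emph{same} for both $a$ and $b$ because it only depends on $L$ and $\theta$.

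The heart of the argument is then to produce $c$ with $c\,da = db$, i.e. $c f'(\theta)\,d\theta = g'(\theta)\,d\theta$. Using part (4) of \autoref{lemma:propertiesofdelta}, the differential $d\theta$ is "killed" exactly by elements of valuation $\geq -\delta(\theta)$; so the equation $c f'(\theta)\,d\theta = g'(\theta)\,d\theta$ in $\Omega$ is equivalent to $v\bigl(c f'(\theta) - g'(\theta)\bigr) \geq -\delta(\theta)$. I would look for $c$ in the form $c = g'(\theta)/f'(\theta)$ when $f'(\theta) \neq 0$, after checking that the hypothesis $\delta(a) \leq \delta(b)$ forces $v(g'(\theta)) \geq v(f'(\theta))$ — which is exactly what the formula $\delta(a) = \min(v(f'(\theta)) + \delta(\theta),0)$ gives, at least in the regime where the minima are achieved by the first term; in the degenerate cases (where one of $a$, $b$ lies in $\cO^{(1)}$, i.e. has $\delta = 0$) one handles things separately, using part (5). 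This produces $c$ a priori in $L = K[\theta] = K[a,b]$; then I would verify the integrality claim $c \in \cO_{K[a,b]}$, which amounts to $v(c) = v(g'(\theta)) - v(f'(\theta)) \geq 0$, again a consequence of $\delta(a) \leq \delta(b)$ and the $\delta$-formula. One also needs that $K[a,b] = L$, or at least to redo the computation inside the subfield $K[a,b]$, replacing $\theta$ by a generator of $K[a,b]/K$; since $\delta$ is independent of the presentation, the formulas transport.

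The main obstacle I anticipate is the bookkeeping around the truncation at $0$ in the definition of $\delta$: the clean identity $\delta(a) = v(f'(\theta)) + \delta(\theta)$ only holds when this quantity is $\leq 0$, and when $a \in \cO^{(1)}$ (so $\delta(a) = 0$ but $v(f'(\theta)) + \delta(\theta)$ could be positive) one loses the direct comparison of $v(f'(\theta))$ with $v(g'(\theta))$. So the proof naturally splits into cases according to whether $a$ and/or $b$ lie in $\cO^{(1)}$. The case $b \in \cO^{(1)}$, i.e. $db = 0$, is trivial: take $c = 0$. The case $a \in \cO^{(1)}$ with $b \notin \cO^{(1)}$ is excluded by the hypothesis $\delta(a) \leq \delta(b)$, since that would force $0 = \delta(a) \leq \delta(b) < 0$, a contradiction. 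In the remaining case both $\delta(a), \delta(b) < 0$, the minima in part (3) are achieved by the first terms, the formula $\delta(a) = v(f'(\theta)) + \delta(\theta)$ is exact, and the argument of the previous paragraph goes through cleanly. A secondary technical point is ensuring $f'(\theta)$ and $g'(\theta)$ are the "right" representatives — one should choose the presentation so that these are the actual values appearing in $\delta$, which is possible by the independence statements in \autoref{defn:delta}.
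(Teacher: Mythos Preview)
The paper does not supply its own proof of this lemma: it is quoted verbatim from \cite[Lemma~2.2]{iovita_zaharescu} with no argument reproduced here. So there is nothing in the present paper to compare your proposal against.

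That said, your approach is the natural one and is correct. The one simplification you should make explicit from the outset is to take $L = K[a,b]$ itself rather than an auxiliary larger extension: since $K$ has algebraically closed residue field, every finite extension of $K$ is totally ramified, so $L = K[\pi]$ for any uniformizer $\pi$ of $L$, and you may write $a = f(\pi)$, $b = g(\pi)$ with $f,g\in\cO_K[x]$. Then $da = f'(\pi)\,d\pi$ and $db = g'(\pi)\,d\pi$ exactly, and in the nondegenerate case $\delta(a),\delta(b)<0$ the hypothesis $\delta(a)\le\delta(b)$ gives $v(f'(\pi))\le v(g'(\pi))$, whence $c \coloneqq g'(\pi)/f'(\pi)\in\cO_L=\cO_{K[a,b]}$ satisfies $c\,da = db$ on the nose. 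Your case split for the truncation at $0$ is also correct: if $db=0$ take $c=0$; and $da=0$ with $db\ne 0$ would force $0=\delta(a)\le\delta(b)<0$, a contradiction. The only unnecessarily tentative part of your write-up is the worry about whether $c$ lands in $\cO_{K[a,b]}$ rather than in some larger ring --- working in $L=K[a,b]$ from the start dissolves that issue, and you need not invoke part~(4) of \autoref{lemma:propertiesofdelta} at all, since the identity $c\,da=db$ holds exactly rather than merely modulo the annihilator of $d\pi$.
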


\begin{proposition}[\protect{\cite[Theorem 2.2]{iovita_zaharescu}}]\label{prop:deeplyramified}
Let $L/K$ be an algebraic extension. Then $L$ is deeply ramified  (\textit{loc.~cit.~}Definition 1.1) if and only if $\delta(\cO_L)$ is unbounded. 
\end{proposition}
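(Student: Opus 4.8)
The plan is to reduce the asserted equivalence to a single, purely valuation-theoretic condition on the finite subextensions of $L/K$. For a finite extension $K\subseteq M\subseteq L$ let $v(\mathcal{D}_{M/K})$ denote the ($v$-normalized) valuation of the different; I claim that both sides of \autoref{prop:deeplyramified} are equivalent to $\sup_{M} v(\mathcal{D}_{M/K})=+\infty$, the supremum over all such finite $M$. The equivalence of this last condition with ``$L/K$ deeply ramified'' is one of the standard equivalent characterizations of deep ramification due to Coates and Greenberg, which I would simply cite; since the residue field of $K$ is algebraically closed every finite subextension $M/K$ is totally ramified, so there is no subtlety in invoking their theory (the different of the infinite extension being understood as this supremum). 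Thus the real content is the elementary equivalence: $\delta(\cO_L)$ is unbounded (equivalently, since $\delta\le 0$, not bounded below) if and only if $\sup_M v(\mathcal{D}_{M/K})=+\infty$.

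For the direction ``differents unbounded $\Rightarrow$ $\delta$ unbounded'', suppose $v(\mathcal{D}_{M_n/K})\to\infty$ along an increasing chain of finite subextensions $M_n$, and choose a uniformizer $\pi_n$ of $M_n$. Then $\pi_n\in\cO_L$, and applying \autoref{defn:delta} with the finite extension taken to be $M_n/K$, the uniformizer $\pi_n$, and $f=x$, one computes $\delta(\pi_n)=\min(v(1/\mathcal{D}_{M_n/K}),0)=-v(\mathcal{D}_{M_n/K})\to-\infty$. Conversely, suppose $\delta(\cO_L)$ is not bounded below and pick $a_n\in\cO_L$ with $\delta(a_n)\to-\infty$. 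Each $a_n$ lies in $\cO_{M_n}$ for some finite $M_n\subseteq L$; since $M_n/K$ is totally ramified we have $\cO_{M_n}=\cO_K[\pi_n]$ for any uniformizer $\pi_n$ of $M_n$, so we may write $a_n=f_n(\pi_n)$ with $f_n\in\cO_K[x]$. Because $f_n'(\pi_n)\in\cO_{M_n}$ has nonnegative valuation, \autoref{defn:delta} gives $\delta(a_n)=\min(v(f_n'(\pi_n)/\mathcal{D}_{M_n/K}),0)\ge -v(\mathcal{D}_{M_n/K})$, whence $v(\mathcal{D}_{M_n/K})\ge -\delta(a_n)\to+\infty$. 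Combining the two implications with the Coates--Greenberg characterization yields the proposition.

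I expect the only genuine obstacle to be bibliographic bookkeeping: Coates and Greenberg phrase deep ramification through several equivalent conditions (vanishing of certain Galois cohomology with coefficients in $\mathfrak{m}_{\mC_p}$, a structural property of $\Omega^1_{\cO_L/\cO_K}$, and unboundedness of the different), and one must invoke the form phrased via the different, together with the already-recorded facts that $\cO_M=\cO_K[\pi]$ for totally ramified $M/K$ and that $\delta$ is independent of the auxiliary choices. (If one instead defines deep ramification directly via the different, then that half is tautological and only the two estimates above remain.) If one wanted a proof avoiding the Coates--Greenberg equivalence altogether, the difficulty would migrate to re-deriving their implication ``different unbounded $\Rightarrow$ deeply ramified'', which is substantial; but with that theorem available, \autoref{prop:deeplyramified} reduces to the two short computations sketched here.
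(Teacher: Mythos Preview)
The paper does not give its own proof of this proposition; it simply records it as \cite[Theorem 2.2]{iovita_zaharescu}. So there is nothing to compare your argument against here.

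That said, your sketch is correct and is essentially the natural proof. The two computations are exactly right: for a uniformizer $\pi$ of a finite totally ramified $M/K$ one has $\delta(\pi)=\min(-v(\mathcal{D}_{M/K}),0)=-v(\mathcal{D}_{M/K})$, while for arbitrary $a\in\cO_M$ one has $\delta(a)\ge -v(\mathcal{D}_{M/K})$ since $f'(\pi)\in\cO_M$. These two bounds pin down $\inf_{a\in\cO_L}\delta(a)=-\sup_M v(\mathcal{D}_{M/K})$, so unboundedness of $\delta$ on $\cO_L$ is literally the same as unboundedness of the differents of the finite subextensions. The only residual issue, which you flag yourself, is that the equivalence with ``deeply ramified'' depends on which of the Coates--Greenberg conditions is taken as the definition in \cite{iovita_zaharescu}; if their Definition~1.1 is the different condition, nothing further is needed, and otherwise one invokes the standard Coates--Greenberg equivalences. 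Either way your reduction is sound.
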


\subsection*{The definition of Fontaine's integration}\label{sec:Fontaineintegral}
We are now ready to define Fontaine's integration. 
Let $ H^0(\cA, \Omega^1_{\cA/\cO_F})$ and respectively $\Lie(\cA)(\cO_F)$ denote the $\cO_F$-modules of invariant differentials on $\cA$ and respectively its Lie algebra. 
Note that $\omega\in  H^0(\cA, \Omega^1_{\cA/\cO_F})$ being invariant implies that $(x\oplus_{
\cA} y)^*(\omega) = x^*(\omega) + y^*(\omega)$ and $[p]^*(\omega) = p\omega$ where $\oplus_{
\cA}$ is the group law in $A(\Kbar)$. 

\begin{definition}\label{defn:Fontaineintegral}
Let $\underline{u}=(u_n)_{n\in \N}\in T_p(A)$ and $\omega\in H^0(\cA, \Omega^1_{\cA/\cO_F})$.
Each $u_n\in \cA(\cO)$ corresponds to a morphism $u_n\colon {\rm Spec}(\cO)\to \cA$, and hence we can pullback $\omega$ along this map giving us a K\"ahler differential $u_n^*(\omega)\in \Omega$. 
The sequence $\left(u_n^*(\omega)\right)_{n\geq 0}$ is a sequence of differentials in $\Omega$ satisfying
$pu_{n+1}^\ast(\omega)=u_n^\ast(\omega)$, and hence defines an element in $V_p(\Omega)\cong \C_p(1)$.

The \cdef{Fontaine integration map} 
\[
\varphi_\cA\colon T_p(A)\to \Lie(\cA)(\cO_F)\otimes_{\cO_F}\C_p(1)
\] 
is a non-zero, $G_F$-equivariant map defined by
\[
\varphi_\cA(\underline{u})(\omega):=\left(u_n^\ast(\omega)\right)_{n\ge 0}\in V_p(\Omega)\cong \C_p(1).
\]
%
%
\end{definition}

\begin{remark}\label{rem:differentdefn}
Using \autoref{thm:Fontainedifferentials} and the function $\delta$ from \autoref{defn:delta}, we can give an alternative description of the Fontaine integration map. 
Let $\underline{u}=(u_n)_{n\geq 0}\in T_p(A)$ and $\omega\in H^0(\cA, \Omega^1_{\cA/\cO_F})$.
Each $u_n\in \cA(\cO)$ corresponds to a morphism $u_n\colon {\rm Spec}(\cO)\to \cA$, and hence we can pullback $\omega$ along this map giving us a K\"ahler differential $u_n^\ast(\omega)\in \Omega$. 

For every $n \geq 0$, there is a maximal $m(n) \geq 0$ such that $u_n^*(\omega) = \alpha_n (d\varepsilon_{m(n)}/\varepsilon_{m(n)})$ with $\alpha_n \in \cO$ where $\varepsilon_{m(n)}$ is some primitive $p^{m(n)}$-th root of unity. 
To see this, we first note that 
\[
\delta\left(\frac{d\varepsilon_r}{\varepsilon_r}\right) = -r - \frac{1}{p^r(p-1)} 
\]
for any primitive $p^r$-th root of unity. This result follow from the definition of $\delta$ and a result of Tate \cite[Proposition 5]{tate:pdiv} on the valuation of the different ideal of $K(\varepsilon_r)/K$. 
By taking $m(n) = -[\delta(u_n^\ast(\omega))]$ where $[x]$ denotes the greatest integer of the real number $x$, we can use  \autoref{lemma:propertiesofdelta}.(6) and \autoref{lemma:relateddelta} to deduce the above equality. 

Now using \autoref{thm:Fontainedifferentials}, we have that 
\[
\varphi_{\cA}(\underline{u})(\omega) = \lim_{n\to \infty}p^{n-m(n)}\alpha_n \in \C_p. 
\]
Moreover, using the definition of $\delta$ and this above interpretation, we can see that if $\uu \in T_p(A)^{G_K}$ (i.e., if $\uu$ is an unramified path), then $\varphi_{\cA}(\underline{u})(\omega) = 0$. Indeed, it is clear from the definition of $\delta$ that $m(n) = 0$. 
\end{remark}

\subsection*{The kernel of the Fontaine integral}
In \cite{IMZ:Fontaine}, we studied the kernel of $\varphi_A$. 
As noted in \autoref{rem:differentdefn},  we have that $T_p(A)^{G_K}$ lies in $\ker(\varphi_A)$, and in \cite[Theorem 4.5, Theorem A.4]{IMZ:Fontaine}, we showed that $T_p(A)^{G_K} = \ker(\varphi_A)$. 
In proving these results, we determined the kernel of the Fontaine integral when restricted to the Tate module of the formal group of $A$. This result will play a role later on, and so we present it below.

\begin{theorem}[\protect{\cite[Theorem 5.5]{IMZ:Fontaine}}]\label{thm:kernelformal}
Let $A$ be an abelian variety over $F$ with good reduction, let $\cA$ denote its N\'eron model, and let $\widehat{\cA}$ be the formal group of $A$. 
The Fontaine integral restricted to the Tate module of $\widehat{\cA}$ is injective i.e., $\ker((\varphi_A)_{|T_p(\widehat{\cA})}) = 0$. 
\end{theorem}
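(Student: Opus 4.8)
The plan is to exploit that good reduction makes $G:=\widehat{\cA}[p^\infty]$ a \emph{connected} $p$-divisible group over $\cO_F$ with $T_p(G)=T_p(\widehat\cA)$, and to show that a nonzero element of $M:=\ker\big((\varphi_A)_{|T_p(\widehat\cA)}\big)$ would manufacture an étale sub-$p$-divisible group of $G$, which is impossible. First I would record that, since $\varphi_A$ is $G_F$-equivariant with $p$-torsion-free target, $M$ is a saturated, $G_F$-stable $\Z_p$-sublattice of $T_p(\widehat\cA)$ with torsion-free quotient. The crucial step is then to identify, after extending scalars to $\C_p$, the map $(\varphi_A)_{|T_p(\widehat\cA)}$ with the canonical Hodge--Tate projection
\[
T_p(G)\otimes_{\Z_p}\C_p\;\cong\;\big(\Lie(\cA)\otimes_{\cO_F}\C_p(1)\big)\oplus\big(\Lie(G^{\vee})^{\ast}\otimes_{\cO_F}\C_p\big)\;\twoheadrightarrow\;\Lie(\cA)\otimes_{\cO_F}\C_p(1)
\]
onto the weight-$1$ summand (here $\Lie(\cA)=\Lie(G)$ and $G^{\vee}$ is the Cartier dual), using \autoref{thm:Fontainedifferentials} together with Fontaine's construction in \cite{fontaine:differentials}. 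Its kernel is the weight-$0$ summand $\Lie(G^{\vee})^{\ast}\otimes_{\cO_F}\C_p$; hence $M\otimes_{\Z_p}\Q_p$ is a subrepresentation of $T_p(A)\otimes_{\Z_p}\Q_p$ which is crystalline (good reduction) and has all Hodge--Tate weights equal to $0$, and is therefore unramified.

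Next I would turn this unramified lattice into a contradiction. Let $G_{\ett}$ be the étale $p$-divisible group over $\cO_F$ with $T_p(G_{\ett})=M$. By Tate's full faithfulness theorem for $p$-divisible groups over $\cO_F$ \cite{tate:pdiv}, the $G_F$-equivariant inclusion $M\hookrightarrow T_p(G)$ is induced by a homomorphism $f\colon G_{\ett}\to G$; since $\ker f$ has trivial Tate module it is trivial, so the scheme-theoretic image $f(G_{\ett})\cong G_{\ett}$ is simultaneously a quotient of an étale $p$-divisible group and a sub-$p$-divisible group of the connected group $G$. As a sub-$p$-divisible group of a connected one is connected (check on $p$-torsion over the special fibre), $G_{\ett}$ is both connected and étale, hence $0$; therefore $M=0$, which is the asserted injectivity.

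The main obstacle is the identification in the first step: a priori one only knows $\varphi_A\neq 0$, so one must genuinely show that the $\C_p$-linear extension of $(\varphi_A)_{|T_p(\widehat\cA)}$ is surjective onto $\Lie(\cA)\otimes_{\cO_F}\C_p(1)$, equivalently that its kernel is pure of weight $0$. A more elementary route, closer to the $\delta$-calculus used throughout the paper, sidesteps this. Using \autoref{rem:differentdefn}, for $0\neq\underline u=(u_n)\in T_p(\widehat\cA)$ and the standard basis $\omega_1,\dots,\omega_g$ of invariant differentials, one computes $v\big(\varphi_A(\underline u)(\omega_i)\big)=\lim_n\big(n+\delta(u_n^{\ast}\omega_i)\big)$ and, because the change-of-basis matrix between $\{u_n^{\ast}\omega_i\}_i$ and $\{du_{n,j}\}_j$ lies in $\GL_g(\cO)$, deduces from \autoref{lemma:propertiesofdelta} that $\min_i\delta(u_n^{\ast}\omega_i)=\min_j\delta(u_{n,j})$. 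It then suffices to prove the ramification estimate $\min_j\delta(u_{n,j})\leq -n+C$ for some $C=C(\underline u)$ and all large $n$ — that the $p^n$-torsion of the connected formal group $\widehat\cA$ is at least as wildly ramified, in the sense of $\delta$, as a primitive $p^n$-th root of unity — whereupon a pigeonhole over $i$ yields a fixed $\omega_i$ with $n+\delta(u_n^{\ast}\omega_i)$ bounded, i.e. $\varphi_A(\underline u)(\omega_i)\neq 0$. Proving that last estimate — a quantitative form of deep ramification, in the spirit of Coates--Greenberg and of Sen's theory — would be the analytic heart of this variant, and is genuinely easier than the theorem itself, since only one witnessing differential is needed.
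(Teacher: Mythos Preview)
This theorem is not proved in the present paper: it is quoted without argument from the authors' earlier work \cite[Theorem~5.5]{IMZ:Fontaine}, so there is no in-paper proof to compare your proposal against.

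That said, your first approach is essentially correct and is a very natural proof. The step you single out as the ``main obstacle'' --- that $(\varphi_A)_{|T_p(\widehat{\cA})}\otimes_{\Z_p}\C_p$ coincides with the Hodge--Tate projection onto the weight-$1$ summand --- is not an obstacle but a known theorem: this is precisely what Fontaine establishes for $p$-divisible groups in \cite{fontaine:differentials}, and it is also recoverable from the Colmez--Wintenberger comparison recorded here as \autoref{prop:anotherperspective}. With that identification in hand, your argument goes through: $M\otimes_{\Z_p}\Q_p$ is crystalline with all Hodge--Tate weights $0$, hence unramified; Tate's full faithfulness \cite{tate:pdiv} promotes the inclusion $M\hookrightarrow T_p(\widehat{\cA})$ to a morphism of $p$-divisible groups from an \'etale one into a connected one, which must vanish. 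One small cleanup: rather than saying ``$\ker f$ has trivial Tate module hence is trivial,'' which tacitly assumes $\ker f$ is itself a $p$-divisible group, it is cleaner to argue level by level that any homomorphism $G_{\ett}[p^n]\to \widehat{\cA}[p^n]$ has image simultaneously \'etale (as a quotient of an \'etale group) and connected (as a closed subscheme of a connected finite flat group scheme), hence trivial.

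Your second, $\delta$-calculus route is only a sketch: the inequality $\min_j \delta(u_{n,j})\le -n + C$ that you isolate is exactly the substantive content of the theorem along that path, and you leave it unproved. As written, only the first approach constitutes a proof.
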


 \subsection*{\bf Another point of view on the Fontaine integration map}\label{sec:another}
 In this subsection, we give another perspective on the Fontaine integration map, which will naturally lead us towards an application of \autoref{thm:kernelformal}. 
 
We keep all the notations from the previous sections and Subsection \ref{subsec:conventions}. 
Recall that we let $\cO:=\cO_{\Kbar}$ and we have the $\cO$-module $\Omega:=\Omega^1_{\cO/\cO_K}$ with its canonical derivation $d\colon \cO\to\Omega$.
Note that $d$ is surjective and $\Omega$ is $p$-divisible, and let us denote $\cO^{(1)}:=\ker(d)$.

\begin{lemma}\label{lemma:squarezero}
Let $A_{\rm inf}^{(1)}$ denote the $p$-adic completion of $\cO^{(1)}$.
Then, the exact sequence of $G_F$-modules
\[
0\to \cO^{(1)}\to \cO\stackrel{d}{\to}\Omega\to 0
\]
induces another exact sequence:
\[
0\to T_p(\Omega)\to A_{\rm inf}^{(1)}\stackrel{\gamma}{\to}\cO_{\C_p}\to 0,
\]
where $\gamma$ is an $\cO_F$-algebra homomorphism and $T_p(\Omega)$ is seen as an ideal of 
$A_{\rm inf}^{(1)}$ of square $0$.
\end{lemma}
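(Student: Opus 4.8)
The plan is to derive the second exact sequence by applying the functor $T_p(-) = \varprojlim_n (-)[p^n]$, together with $p$-adic completion, to the short exact sequence $0\to \cO^{(1)}\to \cO \xrightarrow{d} \Omega \to 0$, and to identify the resulting terms. First I would record that $\Omega$ is $p$-divisible and $p$-torsion (Fontaine's theorem), so the snake lemma applied to multiplication by $p^n$ on the sequence yields, for each $n$, a short exact sequence $0\to \cO^{(1)}[p^n]\to \cO[p^n]\to \Omega[p^n]\to \cO^{(1)}/p^n \to \cO/p^n \to \Omega/p^n\to 0$. Since $\cO$ is a domain that is $p$-divisible and $p$-torsion-free, $\cO[p^n]=0$ and $\cO/p^n\cong \cO_{\C_p}/p^n$; hence $\cO^{(1)}[p^n]=0$ too, and we get short exact sequences $0\to \Omega[p^n]\to \cO^{(1)}/p^n \to \cO/p^n\to 0$ (the map $\cO/p^n\to\Omega/p^n$ is zero as $\Omega$ is $p$-divisible, so $d$ induces $0$ on mod-$p^n$ quotients). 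Passing to the inverse limit over $n$ (the transition maps on $\Omega[p^n]$ are the ones making $T_p(\Omega)$, and the systems are all surjective so $\varprojlim^1$ vanishes) produces
\[
0\to T_p(\Omega)\to A_{\rm inf}^{(1)}\xrightarrow{\gamma} \cO_{\C_p}\to 0,
\]
where $A_{\rm inf}^{(1)}:=\varprojlim_n \cO^{(1)}/p^n$ is the $p$-adic completion of $\cO^{(1)}$ and $\cO_{\C_p}=\varprojlim_n \cO/p^n$.

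Next I would check the algebra structure. Since $\cO^{(1)}$ is an $\cO_K$-subalgebra of $\cO$ (noted in the excerpt), its $p$-adic completion $A_{\rm inf}^{(1)}$ is an $\cO_F$-algebra (indeed $\cO_K$-algebra), the inclusion $\cO^{(1)}\hookrightarrow\cO$ is a ring map, and after completion it induces the ring homomorphism $\gamma$; surjectivity of $\gamma$ is the surjectivity statement above, and $G_F$-equivariance is inherited from that of $d$. It remains to see that the ideal $I:=T_p(\Omega)=\ker\gamma$ squares to zero. For this I would argue directly: an element of $I$ is a compatible sequence $(\eta_n)_n$ with $\eta_n\in\Omega[p^n]\subset \cO^{(1)}/p^n$, i.e. $\eta_n$ is represented by some $a_n\in\cO$ with $p^n a_n\in\cO^{(1)}$; the product of two such elements in the completed ring is represented in $\cO/p^n$ by $a_n b_n$, but $p^n\mid p^n a_n\cdot $ nothing — more precisely $a_n b_n \in p^n\cO$ is false in general, so I need the finer fact that the image of $\Omega[p^n]\cdot\Omega[p^n]$ in $\cO^{(1)}/p^n$ is already zero. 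The clean way: $\Omega[p^n]$ as a subset of $\cO^{(1)}/p^n$ consists of classes of elements $x$ with $d x = 0$ and $p^n x' = x$ solvable — using Fontaine's description $\Omega\cong(\overline K/\underline a_K)(1)$ one sees $\Omega[p^n]$ lifts into $\cO^{(1)}$ to elements of valuation $\geq$ something like $-n-\tfrac1{p-1}$ modulo $\cO^{(1)}$... I would instead phrase it via $T_p(\Omega)\otimes\Q_p\cong\C_p(1)$ and the fact that $T_p(\Omega)$, as a submodule of the completion, is annihilated by the maximal ideal after the relevant twist; concretely, I would show $I^2\subseteq \bigcap_n p^n A_{\rm inf}^{(1)} = 0$ by a valuation estimate: representatives of $I$ have $v\geq -\tfrac{1}{p-1}$-type bounds uniformly, so a product of two has strictly positive valuation growth matching $p^n$.

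The main obstacle I anticipate is precisely this last point — making the "square zero" claim rigorous. The subtlety is that $A_{\rm inf}^{(1)}$ is a completion of a huge non-noetherian ring, and $T_p(\Omega)$ sits inside it not as an obvious honest ideal of $\cO^{(1)}$ but as a limit; one must be careful that the multiplication in the completed ring restricted to $I$ lands in $\bigcap_n p^n A_{\rm inf}^{(1)}$, and then that this intersection is $0$ (which follows from $A_{\rm inf}^{(1)}$ being $p$-adically separated, true by construction as a $p$-adic completion). I would handle it by choosing, for $\xi,\xi'\in I$, compatible lifts $(x_n),(x_n')$ with $p^n x_n, p^n x_n'\in\cO^{(1)}$ and showing $x_n x_n'\in p^n\cO + \cO^{(1)}$ for all $n$ using property (2) of $\delta$ in \autoref{lemma:propertiesofdelta} (which controls $\delta(x_n x_n')$ from below by $\min(\delta(x_n)+v(x_n'),\delta(x_n')+v(x_n))$) together with property (5) relating $\delta(a)=0$ to membership in $\cO^{(1)}$ — the valuations of $x_n,x_n'$ are $\geq -\tfrac{1}{p-1}$ by Fontaine's kernel computation, while $\delta(x_n),\delta(x_n')\geq -n-O(1)$, so $\delta(x_nx_n')\geq -n + \varepsilon$ forces $x_nx_n'$ into $p^n\cO + \cO^{(1)}$, i.e. its class in $\cO^{(1)}/p^n$ is $0$. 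Everything else is formal homological algebra and I expect it to go through routinely.
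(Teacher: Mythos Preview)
Your snake-lemma-then-inverse-limit argument is exactly the proof the paper gives: it writes down the multiplication-by-$p^n$ diagram, invokes the snake lemma to obtain $0\to\Omega[p^n]\to\cO^{(1)}/p^n\cO^{(1)}\to\cO/p^n\cO\to 0$, and passes to the projective limit. The paper does not spell out the square-zero assertion at all in its ``another proof''; it simply cites the references of Colmez and Iovita--Zaharescu for the full statement. So the exact-sequence portion of your proposal is correct and matches the paper.

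Where your proposal stumbles is the square-zero argument, which you rightly flag as the sticking point but then set up incorrectly. The connecting map sends $\omega\in\Omega[p^n]$ to the class of $p^n a$ in $\cO^{(1)}/p^n\cO^{(1)}$, where $a\in\cO$ is any lift with $da=\omega$ (so that $d(p^na)=p^n\omega=0$ and $p^na\in\cO^{(1)}$); the representative in $\cO^{(1)}$ is $p^na$, not $a$ as you write. Once this is straightened out, square-zero is a one-line Leibniz computation and no valuation estimates are needed: for $\omega_1,\omega_2\in\Omega[p^n]$ with lifts $a_1,a_2\in\cO$, the product of their images in $\cO^{(1)}/p^n\cO^{(1)}$ is represented by $(p^na_1)(p^na_2)=p^n\cdot(p^na_1a_2)$, and $p^na_1a_2\in\cO^{(1)}$ because
\[
d(p^na_1a_2)=a_1\cdot p^n\,da_2+a_2\cdot p^n\,da_1=a_1\cdot p^n\omega_2+a_2\cdot p^n\omega_1=0.
\]
Thus the product already vanishes at each finite level, and hence in the limit. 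Your proposed $\delta$-based estimate is both unnecessary and aimed at the wrong target (you try to show $x_nx_n'\in p^n\cO+\cO^{(1)}$, which is not the relevant condition once the representatives are identified correctly).
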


\begin{proof}
The statement follows from \cite[Lemme 3.8]{Colmez:BdR} and also from \cite[Corollary 1.1]{iovita_zaharescu}, but we present another proof below.

We consider the diagram
\[
\begin{tikzcd}[row sep = 1.2em]
0 \arrow{r} & \cO^{(1)} \arrow{r} \arrow{d}{p^n}&\cO \arrow{r}{d}\arrow{d}{p^n} &\Omega \arrow{r}\arrow{d}{p^n} &0\\
0 \arrow{r} & \cO^{(1)} \arrow{r} &\cO \arrow{r}{d} &\Omega \arrow{r} &0.
\end{tikzcd}
\]
The snake lemma gives the exact sequence of $G_F$-modules:
\[
0\to \Omega[p^n]\to \cO^{(1)}/p^n\cO^{(1)}\to \cO/p^n\cO\to 0.
\]
By taking the projective limit with respect to $n$ of this exact sequence, we obtain the claim. 
\end{proof}

Recall that we have the isomorphism $\Lie(\cA)(\cO_F)\cong H^0(\cA, \Omega^1_{\cA/\cO_F})^\vee$.
By \autoref{lemma:squarezero}, we have the short exact sequence
\[
0\to  T_p(\Omega) \to A_{\rm inf}^{(1)}\to \cO_{\C_p}\to 0,
\] 
where $T_p(\Omega)$ is an ideal of $A_{\rm inf}^{(1)}$ such that $(T_p(\Omega))^2=0$.

By definition, we have 
\[
\Lie(\cA)(\cO_F)\otimes_{\cO_F}T_p(\Omega) \cong \ker\left(\cA(A_{\rm inf}^{(1)})\to \cA(\cO_{\C_p})\right),
\] 
and hence we have the following short exact sequence of abelian groups with $G_F$-action
\[
0\to \Lie(\cA)(\cO_F)\otimes_{\cO_F}T_p(\Omega)\to \cA(A_{\rm inf}^{(1)})\to \cA(\cO_{\C_p})\to 0.
\]
Consider the following commutative diagram with exact rows
\[
\begin{tikzcd}[row sep = 1.2em]
0 \arrow{r} & \Lie(\cA)(\cO_F)\otimes_{\cO_F}T_p(\Omega)\arrow{r} \arrow{d}{p^n}&\cA(A_{\rm inf}^{(1)}) \arrow{r}{d}\arrow{d}{p^n} &\cA(\cO_{\C_p}) \arrow{r}\arrow{d}{p^n} &0\\
0 \arrow{r} & \Lie(\cA)(\cO_F)\otimes_{\cO_F}T_p(\Omega)\arrow{r} &\cA(A_{\rm inf}^{(1)}) \arrow{r}{d}&\cA(\cO_{\C_p}) \arrow{r} &0. 
\end{tikzcd}
\]
The snake lemma gives a $G_K$-equivariant map 
\[
\nu_n\colon \cA(\cO_{\C_p})[p^n]\cong A(\Kbar)[p^n]\to \Lie(\cA)(\cO_F)\otimes_{\cO_F} \Omega[p^n]
\]
and by taking the projective limit over $n$'s, we obtain a map
\[
\nu\colon T_p(A) \to \Lie(\cA)(\cO_F)\otimes_{\cO_F}T_p(\Omega). 
\]
\begin{proposition}\label{prop:anotherperspective}
The map obtained above
\[
\nu: T_p(A) \to \Lie(\cA)(\cO_F)\otimes_{\cO_F}T_p(\Omega) \subset {\rm Lie}(\cA)(\cO_F)\otimes_{\cO_F}\C_p(1)
\]
coincides with Fontaine's integral, i.e.~we have $\nu=(\varphi_\cA)$.
\end{proposition}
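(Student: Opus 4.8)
The plan is to show that both maps $\nu$ and $\varphi_\cA$ arise from the same snake-lemma boundary construction applied to compatible short exact sequences, so that the identification $\nu = \varphi_\cA$ is a matter of matching the two diagrams. The key point is \autoref{lemma:squarezero}: the sequence $0\to T_p(\Omega)\to A_{\rm inf}^{(1)}\to \cO_{\C_p}\to 0$ is the projective limit (over multiplication by $p^n$) of the finite-level sequences $0\to \Omega[p^n]\to \cO^{(1)}/p^n\to \cO/p^n\to 0$ obtained from $0\to\cO^{(1)}\to\cO\xrightarrow{d}\Omega\to 0$ by the snake lemma. First I would unwind the definition of $\nu$: applying $\cA(-)$ (equivalently, using the formal/smooth structure of $\cA$ to write $\cA$-points of a square-zero extension) to $0\to T_p(\Omega)\to A_{\rm inf}^{(1)}\to \cO_{\C_p}\to 0$ and using $\Lie(\cA)(\cO_F)\otimes_{\cO_F} T_p(\Omega)\cong \ker(\cA(A_{\rm inf}^{(1)})\to \cA(\cO_{\C_p}))$ gives, via the snake lemma with the $p^n$-multiplication ladder, the maps $\nu_n\colon A(\Kbar)[p^n]\to \Lie(\cA)(\cO_F)\otimes_{\cO_F}\Omega[p^n]$ and then $\nu$ in the limit.

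Next I would make the comparison explicit on a point $\uu=(u_n)\in T_p(A)$. Fix an invariant differential $\omega\in H^0(\cA,\Omega^1_{\cA/\cO_F})$; pairing the target of $\nu$ with $\omega$ via $\Lie(\cA)(\cO_F)\cong H^0(\cA,\Omega^1_{\cA/\cO_F})^\vee$ turns $\nu(\uu)$ into an element of $T_p(\Omega)$, and then into an element of $V_p(\Omega)\cong \C_p(1)$ after tensoring with $\Q_p$. The snake-lemma description says: lift $u_n\in \cA(\cO_{\C_p})[p^n]\cong A(\Kbar)[p^n]$ to a point $\tilde u_n\in \cA(A_{\rm inf}^{(1)})$, form $p^n\tilde u_n$, which lands in $\ker(\cA(A_{\rm inf}^{(1)})\to\cA(\cO_{\C_p}))\cong \Lie(\cA)(\cO_F)\otimes T_p(\Omega)$, and that is $\nu_n$ of the class. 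Concretely, choosing local coordinates so that $\omega = \sum f_i(X)\,dX_i$ near the identity, lifting $u_n$ through $d\colon\cO\to\Omega$ means choosing a preimage in $\cO$ of the coordinates, applying $[p^n]_{\cA}$, and reading off the $T_p(\Omega)$-component; pulling back $\omega$ along $u_n$ and using $p u_{n+1}^*(\omega) = u_n^*(\omega)$ reproduces exactly the differential $u_n^*(\omega)\in\Omega$ whose compatible system defines $\varphi_\cA(\uu)(\omega)\in V_p(\Omega)$. The invariance relations $(x\oplus_\cA y)^*\omega = x^*\omega + y^*\omega$ and $[p]^*\omega = p\omega$ are what guarantee that the snake-lemma boundary, which is built from the additive (group-cohomological) structure, matches the pullback-of-differentials recipe. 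I would also invoke \autoref{rem:differentdefn} to translate $\varphi_\cA$ into the same $\delta$/root-of-unity language used implicitly in $A_{\rm inf}^{(1)}$, so the two limits $\lim p^{n-m(n)}\alpha_n$ literally coincide.

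The main obstacle I expect is bookkeeping rather than a conceptual gap: one must check that the identification $\Lie(\cA)(\cO_F)\otimes_{\cO_F}T_p(\Omega)\cong\ker(\cA(A_{\rm inf}^{(1)})\to\cA(\cO_{\C_p}))$ is compatible — naturally in the square-zero ideal $T_p(\Omega)\subset A_{\rm inf}^{(1)}$ — with the pullback-of-invariant-differentials pairing that defines $\varphi_\cA$, and that the two $p^n$-ladders (the one on $\cA$-points and the one defining $T_p(\Omega)=\varprojlim\Omega[p^n]$) are the same ladder. This is where one uses that $\cA$ is smooth (so $\cA(A_{\rm inf}^{(1)})\to\cA(\cO_{\C_p})$ is surjective, lifts exist) and that the first-order deformation theory of a point along a square-zero ideal $I$ is governed by $\Lie(\cA)\otimes I$ functorially in $I$. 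Once this compatibility is pinned down, tensoring with $\Q_p$ and taking the limit gives $\nu\otimes\Q_p=\varphi_\cA$ as maps into $\Lie(\cA)(\cO_F)\otimes_{\cO_F}\C_p(1)$, and since $\varphi_\cA$ already factors through $\Lie(\cA)(\cO_F)\otimes_{\cO_F}T_p(\Omega)$ (its values lie in $V_p(\Omega)$, cf.\ \autoref{defn:Fontaineintegral}), we get $\nu=\varphi_\cA$ on the nose.
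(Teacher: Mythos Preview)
Your approach is genuinely different from the paper's. The paper does not unwind the snake-lemma construction at all; instead it gives a two-line proof by citation: Wintenberger (\cite[Section 4, page 394]{wintenberger:comparision}) carried out a generalization of this very construction and identified it with the Colmez integration pairing, and Colmez (\cite[Proposition 6.1]{Colmez1992}) had already identified his pairing with Fontaine's integral. So the paper's argument is ``$\nu = \text{Colmez} = \varphi_\cA$'' with both equalities coming from the literature.

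Your plan, by contrast, is a direct verification: match the snake-lemma boundary explicitly with the pull-back-of-invariant-differentials recipe, using the square-zero deformation description of $\ker(\cA(A_{\rm inf}^{(1)})\to\cA(\cO_{\C_p}))$ and the invariance identities $(x\oplus_\cA y)^*\omega = x^*\omega + y^*\omega$, $[p]^*\omega = p\omega$. This is the honest way to see why the proposition is true and would make the paper self-contained here; the cost is exactly the bookkeeping you flag, namely checking that the functorial identification $\Lie(\cA)\otimes I \cong \ker(\cA(R)\to\cA(R/I))$ for a square-zero ideal $I$ is the \emph{same} pairing as evaluation of invariant differentials, and that the two $p^n$-ladders line up. One small imprecision in your sketch: ``lifting $u_n$ through $d\colon\cO\to\Omega$'' is not quite the right picture---the lift is of the $\cA$-point from $\cO_{\C_p}$ to $A_{\rm inf}^{(1)}$, and the connection to $d$ only appears after you unwind the identification $T_p(\Omega)/p^n \cong \Omega[p^n]$ coming from \autoref{lemma:squarezero}. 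If you tighten that step, your argument should go through; but be aware that the paper simply outsources it.
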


\begin{proof}
In \cite[Section 4, page 394]{wintenberger:comparision}, Wintenberger used a generalization of the above construction to obtain an integration pairing which coincides with the Colmez integration pairing $\langle \cdot ,\cdot \  \rangle_{\Cz}$. 
The result now follows from \cite[Proposition 6.1]{Colmez1992}. 
\end{proof}

 \section{\bf Consequences of \autoref{thm:kernelformal}: ramification of $p$-power torsion points on $\widehat{\cA}$}
 \label{sec:differentperspective}
In this section, we use the interpretation of the Fontaine integral from \autoref{prop:anotherperspective} and \autoref{thm:kernelformal} to deduce properties concerning the ramification of $p$-power torsion points on the formal group of $A$. 

To begin, we recall the diagram  
\[
\begin{tikzcd}[row sep = 1.2em]
0 \arrow{r} & \Lie(\cA)(\cO_F)\otimes_{\cO_F}T_p(\Omega)\arrow{r} \arrow{d}{p^n}&\cA(A_{\rm inf}^{(1)}) \arrow{r}{d}\arrow{d}{p^n} &\cA(\cO_{\C_p}) \arrow{r}\arrow{d}{p^n} &0\\
0 \arrow{r} & \Lie(\cA)(\cO_F)\otimes_{\cO_F}T_p(\Omega)\arrow{r} &\cA(A_{\rm inf}^{(1)}) \arrow{r}{d}&\cA(\cO_{\C_p}) \arrow{r} &0. 
\end{tikzcd}
\]
Above, we only wrote a piece of the snake lemma, and by writing more of it, we have an exact sequence of $G_K$-modules
\[
0\to \cA(A_{\rm inf}^{(1)})[p^n]\to \cA(\cO)[p^n]\to \Lie(\cA)(\cO_F)\otimes_{\cO_F} \Omega[p^n]. 
\]
By taking projective limits, we have the \textit{exact} sequence
\begin{equation}\label{eqn:reducingconj}
0\to T_p(\cA(A_{\rm inf}^{(1)}))\to T_p(A)\stackrel{\varphi_\cA}{\to}\Lie(\cA)(\cO_F)\otimes _{\cO_F} T_p(\Omega)\subset \Lie(\cA)(\cO_F)\otimes_{\cO_F} \C_p(1).
\end{equation}
Therefore, \autoref{thm:kernelformal} implies that $T_p(\widehat{\cA}(A_{\rm inf}^{(1)}))=0$.

To study consequences of this property, we will use another ring instead of $A_{\rm inf}^{(1)}$. 

\begin{definition}[\protect{\cite{fontaine}}]\label{defn:Df}
Let $\theta\colon A_{\rm inf}^{(1)}\to \cO_{\C_p}$ denote the projection map. Then, we define $D_f:=\theta^{-1}(\cO)$. 
In \cite[Remark 1.4.7]{fontaine}, Fontaine gives the following construction of $D_f$. 
Let us recall that
\[
V_p(\Omega)=T_p(\Omega)\otimes_{\Z_p} \Q_p=\varprojlim \left(\Omega\stackrel{p}{\leftarrow} \Omega\stackrel{p}{\leftarrow}\cdots\stackrel{p}{\leftarrow}\Omega \cdots  \right)
\]
and that $\Omega$ and $V_p(\Omega)$ are $\cO$-modules. We make $R:=V_p(\Omega)\oplus \cO$ into a commutative ring by defining multiplication as follows:~$(u, \alpha)(v, \beta)=(\beta u+\alpha v, \alpha\beta)$ for $(u, \alpha), (v, \beta)\in R$, i.e. we require that $V_p(\Omega)$ is an ideal of $R$ of square $0$. Then we have
\[
D_f=\{\left(u=(u_n)_{n\ge 0},\alpha\right)\in R\ |\ d(\alpha)=u_0\}. 
\]
\end{definition}

By \autoref{defn:Df}, we have an exact sequence of $G_K$-modules
\[
0\to T_p(\Omega)\to D_f\stackrel{\theta}{\to} \cO\to 0,
\]
where $\theta(u,\alpha)=\alpha$, and the $p$-adic completion of $D_f$ is $A_{\rm inf}^{(1)}$. 
We note that we may construct the diagram above in the same way using $D_f$ instead of $A_{\rm inf}^{(1)}$, which produces the exact sequence \eqref{eqn:SESDf} with $D_f$ instead of $A_{\rm inf}^{(1)}$.
Instead of the exact sequence \eqref{eqn:reducingconj} above, we will have the following \textit{exact} sequence
\[
0\to T_p(\cA(D_f))\to T_p(A)\stackrel{\varphi_\cA}{\to}\Lie(\cA)(\cO_F)\otimes_{\cO_F} T_p(\Omega)\subset \Lie(\cA)(\cO_F)\otimes_{\cO_F} \C_p(1).
\]
Again, the \autoref{thm:kernelformal} implies that $T_p\bigl(\widehat{\cA}(D_f)\bigr)=0$.  

We will use this observation to deduce that $T_p\bigl(\widehat{\cA}(\cO^{(1)})\bigr)=0$. 
In order to do so, we need to show that $\widehat{\cA}[p^n](D_f)\cong \widehat{\cA}[p^n](\cO^{(1)})$, for all $n\ge 1$, which is accomplished through the following two lemmas.

\begin{lemma}\label{lemma:O^1torsionDftorsion}
Let $x\in \cA[p^n](\cO^{(1)})$, then there is $x'\in \cA(D_f)$ with $\theta(x')=x$ and such that $[p^n](x')=0$.
\end{lemma}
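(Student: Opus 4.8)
The plan is to lift an $\mathcal{O}^{(1)}$-valued $p^n$-torsion point of $\cA$ to a $p^n$-torsion point of $\cA(D_f)$ along the projection $\theta\colon D_f\to\cO$. Write $x\in\cA[p^n](\cO^{(1)})\subseteq\cA(\cO^{(1)})\subseteq\cA(\cO)$. Since $\cO^{(1)}\subseteq D_f$ (indeed $D_f=\{(u,\alpha): d(\alpha)=u_0\}$ contains $(0,\alpha)$ exactly when $d(\alpha)=0$, i.e.\ $\alpha\in\cO^{(1)}$, via the splitting $\alpha\mapsto(0,\alpha)$), the point $x$ already has an obvious lift to $\cA(D_f)$, namely the one obtained by applying the ring homomorphism $\cO^{(1)}\inj D_f$ to the coordinates of $x$. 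Call this lift $x_0$. The issue is that $x_0$ need not be $p^n$-torsion: $[p^n]x_0$ lies in $\ker(\cA(D_f)\to\cA(\cO))=\Lie(\cA)(\cO_F)\otimes_{\cO_F}V_p(\Omega)$ (the square-zero ideal), since $\theta([p^n]x_0)=[p^n]x=0$.

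**The correction step.** So I would set $x' = x_0 - y$ for a suitable $y\in\Lie(\cA)(\cO_F)\otimes_{\cO_F}V_p(\Omega)$, viewed inside $\cA(D_f)$ via the square-zero ideal structure; then $\theta(x')=\theta(x_0)=x$ automatically, and I need $[p^n]x'=0$. Because the ideal is square-zero, the group law on this kernel is just addition and $[p^n]$ acts on it by multiplication by $p^n$ (there is no interaction between the $\cO$-part and the ideal beyond the module structure), so $[p^n]x' = [p^n]x_0 - p^n y$. Thus it suffices to solve $p^n y = [p^n]x_0$ in $\Lie(\cA)(\cO_F)\otimes_{\cO_F}V_p(\Omega)$. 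This is solvable precisely because $V_p(\Omega)\cong\C_p(1)$ is a $\Q_p$-vector space, hence uniquely $p$-divisible, so $y = p^{-n}[p^n]x_0$ is the unique choice. Then $x'=x_0-y\in\cA(D_f)$ satisfies $\theta(x')=x$ and $[p^n]x'=0$, as required.

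**The point to check carefully.** The one genuinely non-formal point — and where I expect the main obstacle — is verifying that $[p^n]$ really does act as multiplication by $p^n$ on the square-zero kernel $\ker(\cA(D_f)\to\cA(\cO))$, i.e.\ that the identification $\ker(\cA(D_f)\to\cA(\cO))\cong\Lie(\cA)(\cO_F)\otimes_{\cO_F}V_p(\Omega)$ is compatible with the multiplication-by-$p$ maps on both sides. This is the standard fact that for a smooth group scheme $\cA$ and a square-zero ideal $I$ in a ring $B$ with $B/I=C$, one has $\ker(\cA(B)\to\cA(C))\cong\Lie(\cA)\otimes I$ as groups, functorially, and that multiplication-by-$p^n$ on $\cA$ corresponds to multiplication-by-$p^n$ on $\Lie(\cA)\otimes I$ because the $[p^n]$-map is additive and its derivative at the identity is multiplication by $p^n$ (this last point uses $[p]^*\omega=p\omega$ on invariant differentials, which is recalled just before \autoref{defn:Fontaineintegral}). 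Here $I=V_p(\Omega)$ inside $D_f$ (or $T_p(\Omega)$ inside $A_{\mathrm{inf}}^{(1)}$), which indeed has square zero by \autoref{lemma:squarezero} and \autoref{defn:Df}. Once this compatibility is in hand, the argument is exactly the two-line computation above; I would state it cleanly and cite the group-scheme deformation-theory fact rather than reprove it.
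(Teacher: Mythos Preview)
Your argument is correct, but you have done more work than necessary, and in the process you have missed the point that makes the lemma essentially trivial. You yourself note that $\cO^{(1)}\hookrightarrow D_f$, $\alpha\mapsto(0,\alpha)$, is a \emph{ring} homomorphism. But then the induced map $\cA(\cO^{(1)})\to\cA(D_f)$ is a \emph{group} homomorphism (functoriality of points of a group scheme under ring maps), and group homomorphisms send $p^n$-torsion to $p^n$-torsion. Hence your lift $x_0$ already satisfies $[p^n]x_0=0$; there is no ``issue,'' and no correction step is needed. This is exactly the paper's proof: the section $s\colon\cO^{(1)}\to D_f$ gives $s\colon\cA(\cO^{(1)})\to\cA(D_f)$, and $x':=s(x)$ works on the nose.

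Your correction argument is not wrong --- it recovers the same $x'$ with $y=0$, or would work even if you did not know $[p^n]x_0=0$ --- but it obscures the actual content of the lemma, which is precisely that the sequence $0\to T_p(\Omega)\to D_f\to\cO\to 0$ splits multiplicatively over $\cO^{(1)}$. The deformation-theory machinery (square-zero ideals, $p$-divisibility of $V_p(\Omega)$, compatibility of $[p^n]$ with the Lie-algebra action) that you invoke is correct and is indeed what underlies the companion Lemma in the paper (\autoref{prop:dftorsion}) going the other direction, but for this direction it is superfluous.
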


\begin{proof}
Recall that $D_f:=\left\lbrace\bigl((x_n)_n, y)\in V_p(\Omega)\times \cO\ |\ x_0=dy\right\rbrace$, i.e. we have an exact sequence
\[
0\to T_p(\Omega)\to D_f\stackrel{\theta}{\to}\cO\to 0
\]
and this exact sequence splits over $\cO^{(1)}\subset \cO$, i.e. the following diagram is cartesian and has exact rows
\[
\begin{array}{cccccccccc}
0&\longrightarrow &T_p(\Omega)&\longrightarrow& D_f&\stackrel{\theta}{\longrightarrow}&\cO&\longrightarrow& 0\\
&&||&&\cup&&\cup\\
0&\longrightarrow &T_p(\Omega)&\longrightarrow &T_p(\Omega)\oplus \cO^{(1)}&\stackrel{\theta}{\longrightarrow}&\cO^{(1)}&\longrightarrow& 0.
\end{array}
\]
In particular, the section $s\colon \cO^{(1)} \longrightarrow D_f$ is defined by $s(x):=(0,x)$. Then $s$ defines a morphism $s\colon \cA(\cO^{(1)})\to \cA(D_f)$, and if $x\in \cA[p^n](\cO^{(1)})$, then $s(x)\in \cA[p^n](D_f)$. 
\end{proof}

In the next lemma, we show a converse of \autoref{lemma:O^1torsionDftorsion} at least for the formal group $\widehat{\cA}$ of $A$.

\begin{lemma}
\label{prop:dftorsion}
Let $\widehat{\cA}$ denote the formal group of the abelian scheme $\cA$ and fix $n\ge 1$ an integer.
Let 
\[
0\neq P\in \widehat{\cA}(\cO)[p^n]\backslash \widehat{\cA}(\cO^{(1)})
\]
and let $Q\in \widehat{\cA}(D_f)$ be a point such that $\theta(Q)=P$. Then $[p^m]Q\neq 0$ for all $m\ge n$. 
\end{lemma}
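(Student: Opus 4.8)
The plan is to argue by contradiction: suppose some $Q \in \widehat{\cA}(D_f)$ with $\theta(Q) = P$ satisfies $[p^m]Q = 0$ for some $m \ge n$. Choose $m$ minimal with this property; then $R := [p^{m-1}]Q$ is a nonzero $p$-torsion point of $\widehat{\cA}(D_f)$ with $\theta(R) = [p^{m-1}]P$. I would first reduce to understanding $p$-torsion in the kernel of $\theta$. Since $T_p(\Omega)$ is an ideal of square zero in $D_f$ (by \autoref{defn:Df}) and $\theta$ is the reduction map, the kernel of $\widehat{\cA}(D_f) \to \widehat{\cA}(\cO)$ is $\Lie(\cA)(\cO_F) \otimes_{\cO_F} T_p(\Omega)$; crucially, $T_p(\Omega)$ is $p$-torsion-free (it is a free $\Z_p$-module, being a Tate module, with $V_p(\Omega) \cong \C_p(1)$ by \autoref{thm:Fontainedifferentials}), so this kernel contains no nonzero $p$-power torsion. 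Hence $\theta$ is injective on torsion: the torsion point $R$ maps injectively, and a nonzero $p$-torsion $Q' := [p^{m-n}]Q$ maps to a nonzero element of $\widehat{\cA}[p^n](\cO)$.

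The heart of the matter is then to show that any $p$-power torsion point $Q' \in \widehat{\cA}(D_f)$ actually lies in $\widehat{\cA}(\cO^{(1)})$ — i.e., that $\theta(Q') \in \widehat{\cA}(\cO^{(1)})$ — which contradicts the hypothesis that $P \notin \widehat{\cA}(\cO^{(1)})$ (note $\theta(Q') = [p^{m-n}]P$, and $\widehat{\cA}(\cO^{(1)})$ is closed under the group law, so if $[p^{m-n}]P \in \widehat{\cA}(\cO^{(1)})$ we still need to push this back to $P$ itself — so more carefully I want to take $m = n$ from the start, or run the argument with $Q$ itself being $p^n$-torsion). Let me restructure: since $[p^m]Q = 0$ with $m \ge n$, and $\theta$ is injective on torsion, $\theta(Q) = P$ forces $[p^m]P = 0$, which is automatic. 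The real claim is: $Q \in \widehat{\cA}(D_f)[p^\infty] \implies \theta(Q) \in \widehat{\cA}(\cO^{(1)})$. To prove this I would use \autoref{lemma:propertiesofdelta} and \autoref{defn:Df}: writing $Q = ((u_n)_n, y)$-style coordinates componentwise via the formal group law, the condition that $Q$ is $p^m$-torsion in $D_f$ means each coordinate $y_i$ of $P = \theta(Q)$ satisfies $d y_i$ lies in the image of multiplication-by-$p^m$ structure forced by $T_p(\Omega)$, and combined with the torsion relation one extracts that $\delta(y_i) = 0$, i.e. $y_i \in \cO^{(1)}$ by \autoref{lemma:propertiesofdelta}(5).

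The main obstacle I anticipate is making the last step precise: translating "$Q$ is torsion in $\widehat{\cA}(D_f)$" into a statement about $\delta$ of the coordinates of $\theta(Q)$. The formal group law mixes coordinates, so one cannot argue coordinate-by-coordinate naively; instead I would work with the exact sequence $0 \to \Lie(\cA)(\cO_F) \otimes T_p(\Omega) \to \widehat{\cA}(D_f) \to \widehat{\cA}(\cO) \to 0$ and its snake-lemma consequence (displayed just before this lemma in the text), which gives $\widehat{\cA}(D_f)[p^n] \hookrightarrow \widehat{\cA}(\cO)[p^n] \to \Lie(\cA)(\cO_F) \otimes \Omega[p^n]$; a torsion $Q$ exists iff the image of $P$ under the connecting map $\nu_n$ vanishes. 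Via \autoref{prop:anotherperspective} this connecting map is (a piece of) the Fontaine integral, and via \autoref{rem:differentdefn} the vanishing of $\nu_n(P)$ is exactly the assertion that the differentials $u_n^*(\omega)$ satisfy $\delta(u_n^*(\omega)) > -1$ for all invariant $\omega$, which by \autoref{lemma:propertiesofdelta}(4) and \autoref{thm:kernelformal}'s circle of ideas forces the coordinates of $P$ into $\cO^{(1)}$. So the cleanest route is: $[p^m]Q = 0 \implies \nu_n([p^{m-n}]P) = 0 \implies$ (by the $\delta$-description of the integral and properties of $\delta$) each coordinate of $[p^{m-n}]P$ is in $\cO^{(1)}$; then since the group law on $\widehat{\cA}$ has coefficients in $\cO_K \subset \cO^{(1)}$, a descent/division argument through $\widehat{\cA}(\cO^{(1)})$ (using that $\widehat{\cA}(\cO^{(1)})$ is a subgroup and $P$ is $p^n$-torsion) yields $P \in \widehat{\cA}(\cO^{(1)})$, the desired contradiction.
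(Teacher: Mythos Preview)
Your proposal has a genuine gap at precisely the point that carries all the content of the lemma.

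You correctly observe that $\ker(\theta) = \Lie(\cA)(\cO_F)\otimes T_p(\Omega)$ is $p$-torsion-free, so $\theta$ is injective on torsion; this cleanly reduces the statement to the case $m=n$ (if $[p^m]Q=0$ for some $m\ge n$ then $Q$ is torsion, hence has the same order as $\theta(Q)=P$, hence $[p^n]Q=0$). That part is fine and is a legitimate simplification.

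The problem is your ``heart of the matter'': the claim that $Q\in\widehat{\cA}(D_f)[p^\infty]$ forces $\theta(Q)\in\widehat{\cA}(\cO^{(1)})$. This is exactly the lemma you are trying to prove, and your justification for it does not hold up. You assert that, via \autoref{rem:differentdefn}, the vanishing of $\nu_n(P)$ is the condition $\delta(P^*(\omega))>-1$; but that remark concerns the Fontaine integral on $T_p(A)$, not the finite-level map $\nu_n$, and in any case the correct statement would be $P^*(\omega)=0$ in $\Omega$ (equivalently $\delta(P^*(\omega))=0$), not a bound like $>-1$. More seriously, even granting $P^*(\omega)=0$ for every invariant $\omega$, you still have to deduce $dx_j=0$ for each coordinate $x_j$; writing $\omega_i = \sum_j a_{ij}(X)\,dX_j$ with $(a_{ij}(x))$ invertible, this is an invertibility-of-Jacobian argument. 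You never supply it, and the phrases ``\autoref{thm:kernelformal}'s circle of ideas'' and ``descent/division argument'' do not constitute one. (The latter is also false as stated: $[p^{m-n}]P\in\widehat{\cA}(\cO^{(1)})$ certainly does not imply $P\in\widehat{\cA}(\cO^{(1)})$.)

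The paper's proof bypasses all of this by computing directly in coordinates. Write $Q=(y_1,\dots,y_g)$ with $y_j=x_j+\alpha_j$, $\alpha_j\in V_p(\Omega)$ and $\alpha_{j,0}=dx_j$. Since the $\alpha_j$ lie in a square-zero ideal, the Taylor expansion of $[p^m]$ gives
\[
f_s(y)=f_s(x)+\sum_j \frac{\partial f_s}{\partial X_j}(x)\,\alpha_j=\sum_j \frac{\partial f_s}{\partial X_j}(x)\,\alpha_j.
\]
Because $\widehat{\cA}[p^m]$ is \'etale over $F$, the Jacobian matrix $\bigl(\tfrac{\partial f_s}{\partial X_j}(x)\bigr)$ is invertible over $\overline{K}$; hence $[p^m]Q=0$ forces every $\alpha_j=0$, contradicting $\alpha_i\neq 0$ for the index $i$ with $x_i\notin\cO^{(1)}$. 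This single Jacobian step is exactly what your argument is missing.
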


\begin{proof}
Let $m\ge n$ and denote by $$[p^m](X_1,\dots,X_g):=\left(f_1(X_1,\dots,X_g), f_2(X_1,\dots,X_g),\dots, f_g(X_1,\dots,X_g)\right)$$ the multiplication by $p^m$ on $\widehat{\cA}$.
Let  $S:=\cO_F[[X_1,\dots,X_g)]]/I$, where $I$ is the ideal generated by $f_1, f_2,\dots,f_g$, then we know $S$ is a finite flat $\cO_F$-algebra,  in particular $S$ is a free $\cO_F$-module and $\widehat{\cA}[p^m]:={\rm Spec}(S)$ with the co-multiplication of $\widehat{\cA}$, is a finite flat group-scheme, so $\widehat{\cA}\times_{\cO_F}{\rm Spec}(F)$ is an \'etale, therefore smooth, group-scheme over $F$. This implies that the image in $S\otimes_{\cO_F}F$ of the determinant of the matrix:
$$
\left( \begin{array}{cccccc} \frac{\partial(f_1)}{\partial(X_1)} & \frac{\partial(f_1)}{\partial(X_2)}&\dots&\frac{\partial(f_1)}{\partial(X_g)} \\
\vdots&\vdots&\ddots&\vdots\\
\frac{\partial(f_g)}{\partial(X_1)} & \frac{\partial(f_g)}{\partial(X_2)}&\dots&\frac{\partial(f_g)}{\partial(X_g)}\end{array}\right)
$$ 
is a unit.

Let now $P=(x_1,x_2,\dots,x_g)\in \mathfrak{m}_{\cO}^g\in \widehat{\cA}[p^n](\cO)\backslash \widehat{\cA}[p^n](\cO^{(1)})$, i.e. there is $1\le i\le g$ such that 
$x_i$ is not in $\cO^{(1)}$. Let $P'=(y_1,y_2,\dots,y_g)\in \widehat{\cA}(D_f)$ such that $\theta(P')=P$, i.e. $y_j=\alpha_j+x_j$, with 
$\alpha_j\in V_p(\Omega)$ and $d(x_j)=\alpha_{j,0}$, for all $1\le j\le g$. By the above assumption $\alpha_i\neq 0$.
As in $D_f$ we have $\alpha_j\alpha_k=0$ for all $1\le j,k\le g$, the Taylor formula implies that if $[p^m](P')=0$ we must have:
\[
  f_s(x_1,\dots,x_g)+ \sum_{j=1}^g\frac{\partial(f_s)}{\partial(X_j)}(x_1,\dots,x_g)\alpha_j=\sum_{j=1}^g\frac{\partial(f_s)}{\partial(X_j)}(x_1,\dots,x_g)\alpha_j =0
\]
for every $1\le s\le g$.
But the determinant of the matrix
\[
\left( \begin{array}{cccccc} \frac{\partial(f_1)}{\partial(X_1)}(x_1,\dots,x_g) & \frac{\partial(f_1)}{\partial(X_2)}(x_1,\dots,x_g)&\dots&\frac{\partial(f_1)}{\partial(X_g)}(x_1,\dots,x_g) \\
\vdots&\vdots&\ddots&\vdots\\
\frac{\partial(f_g)}{\partial(X_1)}(x_1,\dots,x_g) & \frac{\partial(f_g)}{\partial(X_2)}(x_1,\dots,x_g)&\dots&\frac{\partial(f_g)}{\partial(X_g)}(x_1,\dots,x_g)\end{array}\right)
\]
is a unit in $\overline{K}$, i.e. it is non-zero and $\alpha_i\neq 0$. This is a contradiction. 
\end{proof} 

\begin{remark}
We note that the group-scheme $\widehat{\cA}[p^m]$ is not smooth over $\cO_F$ (for example $S/pS$ could have nilpotents).
We also remark that as $\widehat{\cA}[p^m]\times_{\cO_F}{\rm Spec}(F)$ is smooth, the map $\theta\otimes 1\colon \widehat{\cA}[p^m](D_f\otimes_{\cO_F}K)\to \widehat{\cA}[p^m](\overline{F})$ is surjective, but this is clear as $D_f\otimes_{\cO_F}F=V_p(\Omega)\oplus \overline{F}$.
\end{remark}

\autoref{lemma:O^1torsionDftorsion} and \autoref{prop:dftorsion} imply that the map $\theta$ gives an isomorphism $\widehat{\cA}[p^n](D_f)\cong 
\widehat{\cA}[p^n](\cO^{(1)})$, for all $n\ge 1$.
Combining this with the fact that $T_p\bigl(\widehat{\cA}(D_f)\bigr)=0$, we have the following result.

\begin{theorem}[$=$\autoref{xthm:main1}]
\label{lemma:ram}
Let $A$ be an abelian variety over $F$ with good reduction. Then there is $n_0\ge 1$ such that 
for every $m\ge n_0$ and $0\neq P\in \widehat{\cA}[p^m](\cO)\setminus \widehat{\cA}[p^{n_0 -1 }](\cO)$, we have $P\notin\widehat{\cA}(\cO^{(1)})$
\end{theorem}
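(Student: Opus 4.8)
The plan is to deduce \autoref{lemma:ram} directly from the two preceding lemmas together with \autoref{thm:kernelformal}. The key input, already established in the excerpt, is that the interpretation of the Fontaine integral via $D_f$ gives the exact sequence
\[
0\to T_p(\widehat{\cA}(D_f))\to T_p(\widehat{\cA})\stackrel{\varphi_\cA}{\to}\Lie(\cA)(\cO_F)\otimes_{\cO_F} T_p(\Omega),
\]
and that \autoref{thm:kernelformal} forces $\ker((\varphi_\cA)_{|T_p(\widehat{\cA})})=0$, hence $T_p(\widehat{\cA}(D_f))=0$. Separately, \autoref{lemma:O^1torsionDftorsion} and \autoref{prop:dftorsion} together show that for each $n\ge 1$ the projection $\theta$ restricts to an isomorphism $\widehat{\cA}[p^n](D_f)\xrightarrow{\ \sim\ }\widehat{\cA}[p^n](\cO^{(1)})$: the first lemma gives a section producing the surjection (actually an injection $s$ the other way), and the second lemma rules out the possibility that a $p^n$-torsion point of $\widehat{\cA}(\cO)$ lying outside $\widehat{\cA}(\cO^{(1)})$ lifts to a torsion point of $\widehat{\cA}(D_f)$. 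Combining, we get $T_p(\widehat{\cA}(\cO^{(1)})) = \varprojlim_n \widehat{\cA}[p^n](\cO^{(1)}) \cong \varprojlim_n \widehat{\cA}[p^n](D_f) = T_p(\widehat{\cA}(D_f)) = 0$.

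Next I would translate the vanishing of the Tate module into the statement of the theorem. The group $M:=\widehat{\cA}(\cO^{(1)})_{\tors}[p^\infty] = \bigcup_n \widehat{\cA}[p^n](\cO^{(1)})$ is an abelian $p$-group all of whose elements are killed by some power of $p$; since its Tate module $\varprojlim_n M[p^n]$ vanishes, $M$ cannot contain a copy of $\Q_p/\Z_p$, i.e. $M$ has no infinitely $p$-divisible elements, so $M$ is of \emph{finite} exponent. Let $n_0 \ge 1$ be minimal with $p^{n_0-1}M = 0$, equivalently $\widehat{\cA}[p^m](\cO^{(1)}) = \widehat{\cA}[p^{n_0-1}](\cO^{(1)})$ for all $m \ge n_0-1$. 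Then for any $m \ge n_0$ and any $0\neq P \in \widehat{\cA}[p^m](\cO)\setminus \widehat{\cA}[p^{n_0-1}](\cO)$, if we had $P \in \widehat{\cA}(\cO^{(1)})$ then $P \in \widehat{\cA}[p^m](\cO^{(1)}) = \widehat{\cA}[p^{n_0-1}](\cO^{(1)}) \subseteq \widehat{\cA}[p^{n_0-1}](\cO)$, contradicting the choice of $P$. Hence $P \notin \widehat{\cA}(\cO^{(1)})$, which is exactly the assertion.

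One small bookkeeping point worth spelling out is why $\varprojlim_n \widehat{\cA}[p^n](\cO^{(1)})$ really computes a Tate module that must vanish: the transition maps are multiplication by $p$, and the isomorphisms $\theta\colon\widehat{\cA}[p^n](D_f)\xrightarrow{\sim}\widehat{\cA}[p^n](\cO^{(1)})$ from \autoref{lemma:O^1torsionDftorsion} and \autoref{prop:dftorsion} are compatible with these transition maps (both commute with $[p]$ and with the inclusions $\widehat{\cA}[p^n]\hookrightarrow\widehat{\cA}[p^{n+1}]$), so they assemble to an isomorphism of inverse systems and hence $T_p(\widehat{\cA}(\cO^{(1)}))\cong T_p(\widehat{\cA}(D_f))$. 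I expect the main — and really the only — obstacle to be this compatibility check and the elementary group-theoretic deduction that a torsion abelian $p$-group with vanishing Tate module has finite exponent; everything substantive (the injectivity of $\varphi_\cA$ on $T_p(\widehat{\cA})$, and the $D_f$-versus-$\cO^{(1)}$ comparison for $p^n$-torsion) has already been carried out in the lemmas above, so the proof of \autoref{lemma:ram} is a short formal argument assembling them.
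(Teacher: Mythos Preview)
Your proposal is correct and matches the paper's approach exactly—the paper's proof is just the sentence preceding the theorem (the two lemmas give $\widehat{\cA}[p^n](D_f)\cong\widehat{\cA}[p^n](\cO^{(1)})$ for all $n\ge1$, and combining with $T_p(\widehat{\cA}(D_f))=0$ yields the result), and you have simply unpacked what the paper leaves implicit.

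One caution: your deduction ``$T_p(M)=0$, hence $M$ has no infinitely $p$-divisible element, hence $M$ has finite exponent'' is not valid for arbitrary abelian $p$-groups (take $M=\bigoplus_{n\ge1}\mZ/p^n\mZ$, which is reduced with $T_p(M)=0$ but of unbounded exponent). Here it works because $M\subset\widehat{\cA}[p^\infty](\cO)\cong(\mQ_p/\mZ_p)^h$, so each $M[p^n]$ is finite; then a Mittag-Leffler stable-image argument on the inverse system, or equivalently the structure theorem for subgroups of $(\mQ_p/\mZ_p)^h$, shows that $T_p(M)=0$ forces $M$ itself to be finite. You should insert this extra observation where you pass from vanishing Tate module to bounded exponent.
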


\begin{remark} \label{remark:ram}
We observe that the above is a result regarding ramification properties of the $p
$-power torsion points of the formal group of our abelian  variety with good reduction over $F$ 
(c.f.~\autoref{prop:deeplyramified}). More precisely, let $m\ge n_0$ and $0\neq P\in 
\widehat{\cA}[p^m](\cO)\setminus \widehat{\cA}[p^{n_0 -1 }](\cO)$ be as in the theorem above. 
Let $P=(x_1,x_2,...,x_g)$ with $x_i\in \cO$ for $1\le i\le g$.  \autoref{lemma:ram} says the following: let 
$L=K[P]:=K[x_1,x_2,...,x_g]$, let $\pi$ denote a uniformizer of $L$ and let $\cD_{L/K}$ denote 
the different ideal of $L/K$. For every $1\le i\le g$ let $f_i(X)\in \cO_K[X]$ 
be polynomials such that $f_i(\pi)=x_i$, for every $i$. Then there is $1\le j\le g$ such that 
$v(f_j'(\pi))< v(\cD_{L/K})$ (i.e. $x_j\notin \cO^{(1)}$).
\end{remark}


\section{\bf A theorem on the ramification type of the field obtained by adjoining a $p$-torsion point of a formal group}
\label{sec:ramification}
In this subsection we study the ramification properties of the extension $K[P]/K$, where $P$ is a non-zero $p$-power torsion point of the formal group of $A$.

We continue to denote by $K$ the completion of the maximal unramified extension of $\Q_p$ in an algebraic closure of $\Q_p$, which we denote $\Kbar$. 
Let $\sF$ denote a formal group of dimension $g$ over ${\rm Spf}(\cO_K)$. 
For example,  $\sF$ can be the formal group of the N\'eron model of $A$ over ${\rm Spf}(\cO_K)$.

To begin, we define the notion of a strict formal group. 

\begin{definition}\label{defn:strict}
Consider the multiplication-by-$p$ map \[[p](X_1,\dots.,X_g) = (f_1(X_1,\dots.,X_g),f_2(X_1,\dots.,X_g),\dots,f_g(X_1,\dots.,X_g))\] on $\sF$ where each $f_i(X_1,\dots.,X_g)$ is a power series in with coefficients in $\cO_K$. 
For each $1\leq i\leq g$, define $F_i(X_1,\dots,X_g)$ to be the form comprised of monomials of $f_i$ which have unit coefficient and minimal degree, where we consider each monomial $X_1,\dots,X_g$ to be of degree 1.

Let $d_1,\dots,d_g$ denote the degree of these forms $F_1(X_1,\dots,X_g),\dots,F_g(X_1,\dots,X_g)$, respectively, which we note are (possibly distinct) powers of $p$. 
Let $G_1(X_1,\dots,X_g),\dots,G_g(X_1,\dots,X_g)$ denote the reductions modulo $p$ of the forms $F_1(X_1,\dots,X_g),\dots,F_g(X_1,\dots,X_g)$.

Consider the system of equations
\begin{equation}\label{eqn:strict}
G_1(X_1,\dots,X_g) = G_2(X_1,\dots,X_g) = \cdots = G_g(X_1,\dots,X_g) = 0,
\end{equation}
We say that the formal group $\sF$ is \cdef{strict} if $d_1 = d_2 = \cdots = d_g$ and the only solution to \eqref{eqn:strict} is $(0,0,\dots,0) \in (\overline{\mF_p})^g$. 
\end{definition}

\begin{remark}\label{rem:strict1}
If $\sF$ is a formal group of dimension $1$, then it is clear that $\sF$ is strict since we will have that $F_1(X_1) = uX_1^{p^h}$, where $h$ is the height of $\sF$. 
Moreover, if $\sF$  is the product of $1$-dimensional formal groups, then again $\sF$ is strict. 
\end{remark}

\begin{remark}\label{rem:strict2}
We can given an equivalent characterization of strict as follows. 
Consider the $g\times g$ matrix $M = (a_{ij})$ where the entry $a_{ij}$ consists of the coefficient of $X_i$  in the linear form $G_j$ for each $1\leq i,j \leq g$. 
The condition that $\sF$ be strict is equivalent to the determinant of $M$ being non-zero.

We refer the reader to \cite[Remark 4.14]{arias} for an example of a $2$-dimensional formal group of height $4$ where this condition holds, and here we note that the above degrees all equal $p^2$. Moreover, we remark that the proof from \textit{loc.~cit.~}holds for any $g$-dimensional formal group of height $2g$ where the degrees $d_1 = d_2 = \cdots = d_g$ all equal $p^2$. 
\end{remark}

With this definition, we can state our main result.

\begin{theorem}\label{thm:tamelyramifiedtorsion}
Let $\sF$ be a strict formal group of dimension $g$.
For $0\neq P = (x_1,\dots,x_g) \in \sF[p](\cO)$, the field of definition $K(P)/K$ is tamely ramified and $\cO_{K(P)} \cong \cO_K[x_1,\dots,x_g]$. 
Moreover, $K(\sF[p])/K$ is tamely ramified. 
\end{theorem}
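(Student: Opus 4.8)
The plan is to follow Serre's and Arias-de-Reyna's strategy: compute the $p$-adic valuations of the coordinates of a nonzero $p$-torsion point $P=(x_1,\dots,x_g)$ of $\sF$, show that they are all equal to a fixed rational number $\alpha$ with denominator prime to $p$, and then invoke \cite[Theorem 3.3]{arias} (the criterion that a constant minimal valuation $\alpha$ forces wild inertia to act trivially on $\sF[p]$, hence $K(\sF[p])/K$ tamely ramified). Concretely, first I would set $v_i := v(x_i)$ and, after reordering, let $\alpha := \min_i v_i > 0$ (positivity because $P$ lies in the maximal ideal). Writing $[p](X_1,\dots,X_g) = (f_1,\dots,f_g) = 0$ at $P$, I would analyze each equation $f_s(x_1,\dots,x_g) = 0$ by comparing valuations of the monomials appearing in $f_s$: monomials of degree $1$ contribute valuation at least $\alpha + v(p) = \alpha + 1$ (in fact the linear part of $f_s$ has coefficients in $p\cO_K$, since $[p]'(0) = p\cdot\mathrm{Id}$), while the "leading unit form" $F_s$ of degree $d_s = d$ contributes valuation $d\cdot(\text{something})$; every other monomial has either non-unit coefficient (hence valuation $\geq \alpha\cdot(\deg) + 1$) or degree $> d$ (hence strictly larger valuation than a degree-$d$ unit monomial once $\alpha$ is the min).

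The key computation is then a Newton-polygon / lowest-order-term argument. The vanishing of $f_s(P)$ forces the terms of minimal valuation to cancel. If $d\alpha < \alpha + 1$, i.e. $\alpha < \tfrac{1}{d-1}$, the unit form $F_s$ evaluated at $P$ is the unique term of smallest valuation unless it happens to vanish to higher order; so for $F_s(x_1,\dots,x_g)$ the minimal-valuation terms among its monomials must cancel. Reducing mod the maximal ideal after dividing through by $p^{d\alpha/?}$ — more precisely, scaling so that each $x_i = \pi^{v_i/e}u_i$ with $u_i$ a unit or zero and examining $G_s(\bar u_1,\dots,\bar u_g)$ over $\overline{\F_p}$ where $\bar u_i \neq 0$ exactly for those $i$ with $v_i = \alpha$ — the strictness hypothesis (\autoref{defn:strict}: the only common zero of $G_1,\dots,G_g$ in $(\overline{\F_p})^g$ is the origin) forces $\bar u_i = 0$ for all $i$, contradicting $\bar u_i \neq 0$ for the indices achieving the minimum. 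Hence in fact no index can have $v_i$ strictly smaller than the others; all $v_i$ are equal, say to $\alpha$. Then a further look at the equations pins down $\alpha$: the cancellation of the two lowest strata (degree-$d$ unit form of valuation $d\alpha$ versus linear part of valuation $\alpha + 1$) forces $d\alpha = \alpha + 1$, i.e. $\alpha = \tfrac{1}{d-1}$, whose denominator $d-1$ is prime to $p$ since $d$ is a power of $p$.

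With $\alpha = \tfrac{1}{d-1}$ established and constant over all coordinates of all nonzero $P \in \sF[p](\cO)$, the hypotheses of \cite[Theorem 3.3]{arias} are met, so $K(\sF[p])/K$ is tamely ramified; this is the "moreover" assertion. (The statements $K(P)/K$ tamely ramified and $\cO_{K(P)} = \cO_K[x_1,\dots,x_g]$ are handled in the earlier part of the theorem, using that some $x_j$ has valuation $\tfrac{1}{d-1}$ exactly, hence is a uniformizer of $K(P)$ with $\cO_K[x_j] = \cO_{K(P)}$ and tame ramification index $d-1$.) I expect the main obstacle to be the bookkeeping in the Newton-polygon step: one must rule out the possibility that the leading unit form $F_s$ itself vanishes identically when restricted to the support of the minimal coordinates, and that no "accidental" monomial of intermediate degree with unit coefficient interferes — this is exactly where the strictness hypothesis (nondegeneracy of the system $G_1 = \cdots = G_g = 0$, equivalently $\det M \neq 0$ in \autoref{rem:strict2}) is doing all the work, and the argument must be set up so that a single application of it kills every index simultaneously rather than one at a time.
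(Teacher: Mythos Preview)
Your valuation computation is essentially right and gives a cleaner route to the ``tamely ramified'' assertions than the paper's, but there is a logical slip and then a genuine gap for the ring-of-integers claim.

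First the slip. The contradiction you derive from strictness under the hypothesis $d\alpha<\alpha+1$ is a contradiction with \emph{that hypothesis}, not with the valuations $v(x_i)$ being unequal. So the correct conclusion is $\alpha=\min_i v(x_i)\ge \tfrac{1}{d-1}$; the sentence ``hence all $v_i$ are equal'' is a non sequitur and is in fact false in general (already for a product of one-dimensional formal groups one can have a $p$-torsion point with some coordinates zero). Fortunately you never actually use equality of all $v_i$: your ``further look'' only needs one index $s$ with $v(x_s)=\alpha$, and then $p x_s$ must be cancelled by a unit-coefficient monomial of degree $\ge d$, forcing $d\alpha\le 1+\alpha$. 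So after this correction you do get $\alpha=\tfrac{1}{d-1}$ for every nonzero $P$, and \cite[Theorem 3.3]{arias} then yields that $K(\sF[p])/K$, and a fortiori each $K(P)/K$, is tamely ramified.

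Now the gap. From $v(x_j)=\tfrac{1}{d-1}$ you conclude that $x_j$ is a uniformizer of $K(P)$ and hence $\cO_{K(P)}=\cO_K[x_1,\dots,x_g]$. But $x_j$ is a uniformizer only if the (totally ramified) degree $e=[K(P):K]$ equals $d-1$; what you have shown is merely $(d-1)\mid e$. Nothing in your argument bounds $e$ from above, and citing tame ramification only gives $\gcd(e,p)=1$. This is precisely the point where the paper does real additional work: it first passes to carefully chosen linear combinations $z_1^\ast,\dots,z_g^\ast$ of the $x_i$ (\autoref{lemma:niceisom}) engineered so that each $z_i^\ast$ simultaneously minimizes $v(\,\cdot\,)$ \emph{and} $v(\,\cdot\,-\sigma(\,\cdot\,))$ for every $\sigma$; it then shows $v(z_j^\ast-\sigma(z_j^\ast))=v(z_j^\ast)=\tfrac{1}{d-1}$, and finally uses a two-sided Krasner argument against the Eisenstein polynomial $p+uZ^{d-1}$ to pin down $K(P)=K(z_j^\ast)=K(\theta)$ with $[K(\theta):K]=d-1$. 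That Krasner step is what delivers $e=d-1$, and with it the uniformizer statement and $\cO_{K(P)}=\cO_K[x_1,\dots,x_g]$. Your Newton-polygon computation supplies the valuation $\tfrac{1}{d-1}$ but not the control on conjugate distances needed to run Krasner, so as written you have proved the two tame-ramification claims but not the description of $\cO_{K(P)}$.
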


\begin{proposition}
\label{prop:torsionram}
Let $\sF$ be a strict the formal group of dimension $g$.
For every $0\neq P\in\sF[p](\cO)$, the coordinates of $P$ are not all in $\cO^{(1)}$. 
\end{proposition}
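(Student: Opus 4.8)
The plan is to argue by contradiction: suppose $0 \neq P = (x_1,\dots,x_g) \in \sF[p](\cO)$ has all coordinates $x_i \in \cO^{(1)}$, i.e. $\delta(x_i) = 0$ for every $i$ by \autoref{lemma:propertiesofdelta}.(5). I want to extract a contradiction from the strictness hypothesis by examining the lowest-order terms of the equations $f_i(x_1,\dots,x_g) = 0$. Write $\alpha = \min_i v(x_i) > 0$ (positive since $P$ lies in the maximal ideal). Using the notation of \autoref{defn:strict}, in each $f_i$ the monomials of minimal degree with unit coefficient form the piece $F_i$ of degree $d := d_1 = \cdots = d_g$ (a power of $p$), and all other monomials of $f_i$ either have degree $\geq d$ but non-unit (hence $p$-divisible) coefficient, or degree strictly larger than $d$. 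The first step is therefore a valuation bookkeeping: evaluating $f_i$ at $P$, the term $F_i(x_1,\dots,x_g)$ has valuation $\geq d\alpha$, with the "expected" valuation exactly $d\alpha$ contributed by the reduction $G_i$, while every other monomial of $f_i$ contributes valuation strictly greater than $d\alpha$ — the higher-degree monomials give valuation $\geq (d+1)\alpha > d\alpha$ (here one uses that the gap between consecutive admissible degrees, powers of $p$, is at least... well, at least $1$; more carefully the next degree after $d$ that can appear is $\geq d+1$ since degrees are integers), and the $p$-divisible ones give valuation $\geq 1 + d\alpha$. Combined with $\sum_j \partial F_i/\partial X_j$ type considerations, this forces $G_i(\bar{x}_1,\dots,\bar{x}_g) = 0$ in $\overline{\mF_p}$, where $\bar{x}_j$ denotes the image of $x_j/\varpi$ in the residue field for an element $\varpi$ of valuation $\alpha$ — but at least one $\bar{x}_j$ is non-zero by the choice of $\alpha$. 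That already contradicts strictness, \emph{without} using $\delta$.

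So the real subtlety must be that strictness alone does \emph{not} immediately give the contradiction: the issue is that $\varpi$ with $v(\varpi) = \alpha$ need not exist in the relevant field, or more precisely that the "reduction mod $\varpi$" step requires $\alpha$ to be realized, and the $G_i$-vanishing argument needs to be run inside an appropriate valued extension; the honest obstacle is in controlling what happens when the minimal valuation $\alpha$ is achieved by several coordinates simultaneously and cancellations could occur among the minimal-degree terms. This is exactly where the hypothesis $x_i \in \cO^{(1)}$, i.e. $\delta(x_i) = 0$, must be doing work: by \autoref{lemma:propertiesofdelta}.(2) and (4), having $\delta(x_i) = 0$ controls $x_i\, dy$ terms and, via the relation $d f_i(P) = 0$ in $\Omega$ (since $f_i(P) = 0$ in $\cO$), gives a relation among the $d x_j$ with coefficients $\partial f_i/\partial X_j (P)$. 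The plan is: differentiate the identity $f_i(x_1,\dots,x_g) = 0$ to get $\sum_j \frac{\partial f_i}{\partial X_j}(P)\, dx_j = 0$ in $\Omega$ for each $i$, then feed this through \autoref{lemma:propertiesofdelta}.(4): $\frac{\partial f_i}{\partial X_j}(P)\, dx_j = 0$ would force $v(\partial f_i/\partial X_j(P)) + \delta(x_j) \geq 0$. Since $\delta(x_j) = 0$ this says $v(\partial f_i/\partial X_j(P)) \geq 0$, which is automatic and unhelpful directly — so instead I should work one valuation-level deeper, scaling by $\varpi^{-1}$ so that the leading behaviour of $dx_j$ becomes visible, and use that the $\delta$ of the leading part is exactly $0$ to deduce the $dx_j$'s leading terms are "independent" in $\Omega$ in the sense needed.

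Concretely, the key steps in order: (1) reduce to the leading forms, writing each $f_i(P) = F_i(P) + (\text{strictly higher valuation})$ and $F_i(P) \equiv (\text{scalar}) \cdot G_i(\text{leading parts of } x_j) \pmod{\text{higher valuation}}$; (2) set up the leading parts $\xi_j$ of the $x_j$ living in the graded/residue object attached to the valued field $K(P)$, so that not all $\xi_j$ vanish; (3) use that $f_i(P) = 0$ for all $i$, together with the valuation gap argument from step (1), to conclude $G_i(\xi_1,\dots,\xi_g) = 0$ for all $i$; (4) invoke the equivalent matrix characterization of strictness in \autoref{rem:strict2} — the matrix $M = (a_{ij})$ of coefficients of the linear forms $G_j$ is invertible over $\overline{\mF_p}$ — wait, the $G_i$ are forms of degree $d$, a power of $p$, not linear in general, so strictness says the only common zero is the origin; apply this to $(\xi_1,\dots,\xi_g)$ to get all $\xi_j = 0$, the desired contradiction; (5) the role of $\cO^{(1)}$/$\delta$ is to guarantee that the passage from "$x_j \in \cO^{(1)}$" to "the leading parts $\xi_j$ are well-defined and not all zero in the residue field of $K(P)$" is legitimate — i.e. that $\delta(x_j) = 0$ prevents the kind of wild ramification where no uniformizer-scaling recovers a nonzero residue, which is precisely the content of \autoref{prop:deeplyramified} (deeply ramified $\iff$ $\delta$ unbounded) applied to the finite extension $K(P)/K$ where $\delta$ is bounded. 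The main obstacle I anticipate is step (3)–(5): rigorously justifying that the strictness condition on the mod-$p$ forms $G_i$ transfers to a statement about actual coordinates of a torsion point, handling possible valuation collisions and the subtlety that the minimal-valuation $\alpha$ might not be rational with denominator controllable a priori — but since $K(P)/K$ is a finite extension (a priori, before we know it is tame) its value group is $\frac{1}{e}\mZ$ for some $e$, so $\alpha = c/e$ for an integer $c$, and one can scale by a suitable power of a uniformizer of $K(P)$; the assumption $\delta(x_j)=0$ then pins down that this scaling genuinely sees the $G_i$ and not some higher-order ghost.
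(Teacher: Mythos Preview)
Your approach is quite different from the paper's, and as written it has a real gap.

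The paper does not argue directly from the strictness condition at all. Instead it invokes \autoref{thm:tamelyramifiedtorsion} as a black box: that theorem produces, for each $0\neq P=(x_1,\dots,x_g)\in\sF[p](\cO)$, a coordinate $x_i$ which is a uniformizer of the (nontrivial, totally ramified) extension $K(P)/K$. Writing $x_i=\pi$ and $f(T)=T$ in \autoref{defn:delta} gives $\delta(x_i)=-v(\cD_{K(P)/K})<0$, hence $x_i\notin\cO^{(1)}$ by \autoref{lemma:propertiesofdelta}(5). That is the entire proof.

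Your valuation bookkeeping in step~(1) contains an error that breaks the argument. The $p$-divisible monomials of $f_i$ include the \emph{linear} term $pX_i$, whose value at $P$ has valuation $1+v(x_i)\ge 1+\alpha$, not $\ge 1+d\alpha$ as you write. Your conclusion $G_i(\xi_1,\dots,\xi_g)=0$ therefore requires $d\alpha<1+\alpha$, i.e.\ $\alpha<1/(d-1)$. But the actual value (established in the proof of \autoref{thm:tamelyramifiedtorsion}) is exactly $\alpha=1/(d-1)$, at which $pX_i$ and $F_i(P)$ have the same valuation; the reduction then reads $G_i(\xi)+c\,\xi_i=0$ for a nonzero constant $c$, not $G_i(\xi)=0$, so strictness is not contradicted. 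This is not a minor technicality: your argument, as written, never uses the hypothesis $x_i\in\cO^{(1)}$ and would, if correct, show that a strict formal group has no nonzero $p$-torsion whatsoever --- which is false.

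You sense this yourself (``the real subtlety must be\dots''), but the subsequent attempts to involve $\delta$ do not land. Differentiating $f_i(P)=0$ under the hypothesis $dx_j=0$ yields the tautology $0=0$, as you note; the proposal to ``work one valuation-level deeper'' is left as a wish rather than an argument; and invoking \autoref{prop:deeplyramified} for the finite extension $K(P)/K$ gives nothing, since $\delta$ is automatically bounded on any finite extension. In short, the hypothesis $x_i\in\cO^{(1)}$ is never actually used, and the place where the argument needs it --- the collision $d\alpha=1+\alpha$ --- is precisely where the proof of \autoref{thm:tamelyramifiedtorsion} does the hard work (Krasner's lemma, the change of variables of \autoref{lemma:niceisom}, etc.).
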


\begin{proof}
Let $0\neq P = (x_1,\dots,x_g) \in \sF[p](\cO)$ be a non-zero $p$-torsion point. 
By \autoref{thm:tamelyramifiedtorsion}, we know that the extension $K(P)/K$ is tamely ramified and that there exists some coordinate $x_i$ which is a uniformizer for $K(P)/K$. 
By \cite[Proposition III.6.13]{Serre}, we have that $v(\mathcal{D}_{K(P)/K}) > 0 $ where $\mathcal{D}_{K(P)/K}$ is the different ideal of $K(P)/K$. 
Now since $x_i$ is a uniformizer for $K(P)/K$, we have that 
\[
\delta(x_i) = -v(\mathcal{D}_{K(P)/K})  < 0,
\]
where $\delta$ is the function defined in \autoref{defn:delta}. 
By \autoref{lemma:propertiesofdelta}.(5), $x_i \notin \cO^{(1)}$ as desired.  
\end{proof}

For the remainder of this section, we focus on proving \autoref{thm:tamelyramifiedtorsion}. The proof can be broken down into three steps.
\begin{enumerate}
\item Given a non-zero $p$-torsion point $P = (x_1,\dots,x_g) \in \sF[p](\cO)$, we will carefully construct linear combinations $z_i^*$ of the $x_1,\dots,x_n$ which satisfy nice properties in terms of their valuations and distances between their $\overline{K}$-conjugates. See \autoref{lemma:niceisom}. 
\item Next, we consider the change of variables (i.e., the isomorphism of formal groups) which sends the coordinate $X_i$ to the linear combination $Z_i^*$ described above. We use the properties of the $z_i^*$ and the strictness of $\sF$ to precisely determine the valuation of $z_i^*$ and to estimate the valuation of the difference between them and their $\overline{K}$-conjugates. 
\item Finally, we use Krasner's lemma to deduce that one of the original coordinates $x_1,\dots,x_g$ of $P$ must be a uniformizer for the maximal order of $K(P)$, from which \autoref{thm:tamelyramifiedtorsion} follows. 
\end{enumerate}

\begin{lemma}\label{lemma:niceisom}
Let $\sF$ be a formal group of dimension $g$ over $\Spf(\cO_K)$. 
Let $0\neq P= (x_1,\dots,x_g) \in \sF[p](\cO)$. 
There exist linear combinations $z_1^*,\dots,z_g^*$ of $x_1,\dots,x_g$ with coefficients in $(\cO_K)^{\times} \cup \{0\}$ which satisfy: 
\begin{enumerate}
\item $K(z_{i}^*) \cong K(P)$,
\item $v(z_{i}^*) =  \min\{ v(x_{1}),\dots, v(x_{g})\}$,
\item $v(z_{i}^* - \sigma(z_{i}^*)) = \min\{ v(x_{1} - \sigma(x_{1})),\dots, v(x_{g} - \sigma(x_{g}))\}$ for all $\sigma \in \gal(\widetilde{K(P)}/K)$ where $\widetilde{K(P)}$ is the Galois closure of $K(P)$,
\end{enumerate}
and such the matrix $M$ representing the change of coordinates $(z_1^*,\dots,z_g^*)^t= M (x_1,\dots,x_g)^t$ is invertible. Here 
the exponent $t$ indicates the transpose of a matrix.
\end{lemma}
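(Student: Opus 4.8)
The plan is to produce the linear combinations $z_i^*$ by a generic-choice / Zariski-density argument over the residue field, exactly mimicking the way one chooses a "good" linear change of coordinates so that a given element becomes a uniformizer simultaneously in all the relevant senses. First I would fix notation: let $N = \widetilde{K(P)}$ be the Galois closure of $K(P)/K$ and let $\Sigma = \gal(N/K)$; let $m := \min_j v(x_j)$ and, for each non-identity $\sigma\in\Sigma$, let $m_\sigma := \min_j v(x_j - \sigma(x_j))$ (both are rational numbers, and at least one $x_j$ realizes each minimum). For a vector $\mathbf{c} = (c_1,\dots,c_g)$ with entries in $\cO_K$, set $z(\mathbf{c}) := \sum_j c_j x_j$. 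The point is that the three conditions (2), (3) for $z(\mathbf{c})$, together with condition (1), all fail only on a "thin" set of choices of $\mathbf{c}$: condition (2) can only fail if the leading terms (coefficient of valuation exactly $m$) cancel, which is the vanishing of a single nonzero $\overline{\mF_p}$-linear form in the reductions $\bar c_j$; condition (3), for each fixed $\sigma$, likewise fails only when another nonzero $\overline{\mF_p}$-linear form in the $\bar c_j$ vanishes; and since $\Sigma$ is finite this is a finite union of proper hyperplanes in $\mathbb{A}^g(\overline{\mF_p})$. Because the residue field of $\cO_K$ is $\overline{\mF_p}$, which is infinite, its $\mF_p$-points (or rather $\overline{\mF_p}$-points, but we only need infinitely many) avoid any finite union of proper linear subspaces; lifting such a $\bar{\mathbf c}$ to a vector $\mathbf c\in(\cO_K)^g$ with each $c_j\in \cO_K^\times\cup\{0\}$ gives a valid $z(\mathbf c)$.

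Next I would handle condition (1), that $K(z(\mathbf c)) = K(P)$, and the invertibility of the resulting matrix $M$, simultaneously. The cleanest route: choose the first row $\mathbf c^{(1)}$ so that $K(z(\mathbf c^{(1)})) = K(P)$ — this is the classical primitive-element-type fact that a generic $\cO_K$-linear combination of $x_1,\dots,x_g$ generates $K(x_1,\dots,x_g)$ over $K$, which again fails only on finitely many hyperplanes (the locus where $z(\mathbf c^{(1)})$ is fixed by some nontrivial subgroup), so it is compatible with the avoidance conditions above. Then choose rows $\mathbf c^{(2)},\dots,\mathbf c^{(g)}$ one at a time so that each new reduced vector $\bar{\mathbf c}^{(i)}$ avoids (a) all the finitely many bad hyperplanes from conditions (1)–(3) and (b) the span of the previously chosen $\bar{\mathbf c}^{(1)},\dots,\bar{\mathbf c}^{(i-1)}$ over $\overline{\mF_p}$; this last span is again a proper subspace, so such a choice exists over the infinite residue field. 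Lifting, the matrix $M = (c^{(i)}_j)$ reduces mod the maximal ideal of $\cO_K$ to an invertible matrix over $\overline{\mF_p}$, hence $\det M\in\cO_K^\times$ and $M$ is invertible. For each $i$, condition (1) forces $K(z_i^*) = K(z(\mathbf c^{(i)})) = K(P)$ because $z(\mathbf c^{(i)})$ was built to generate, or — more simply — once we know $M$ is invertible over $\cO_K$ and $K(z_1^*,\dots,z_g^*) = K(x_1,\dots,x_g) = K(P)$, each individual $z_i^*$ generating $K(P)$ is arranged by the same generic-position argument applied row by row.

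The one genuine subtlety — and the step I expect to be the main obstacle — is condition (3) for \emph{all} $\sigma\in\Sigma$ at once, and in particular making sure that the "bad locus" really is a finite union of proper subspaces. For fixed $\sigma$, write $x_j - \sigma(x_j) = \pi_N^{m_\sigma}(\lambda_{\sigma,j} + (\text{higher order}))$ in terms of a uniformizer $\pi_N$ of $N$, where $\lambda_{\sigma,j}$ lies in the residue field of $N$ and is nonzero for at least one $j$; then $z(\mathbf c) - \sigma(z(\mathbf c)) = \pi_N^{m_\sigma}\big(\sum_j c_j\lambda_{\sigma,j} + \cdots\big)$, and $v(z(\mathbf c) - \sigma(z(\mathbf c))) > m_\sigma$ exactly when $\sum_j \bar c_j\lambda_{\sigma,j} = 0$ in the residue field of $N$, a single nontrivial linear condition on $\bar{\mathbf c}$. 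The thing to be careful about is that different $\sigma$ may have different $m_\sigma$, so one cannot combine them into one form; but since $\Sigma$ is finite we simply take the union over $\sigma$ of these (at most $|\Sigma|-1$) hyperplanes, together with the hyperplanes from (1) and (2) and the spanning conditions, and invoke that the infinite residue field is not a finite union of proper subspaces. Provided $p > 2$ (so the residue field, being $\overline{\mF_p}$, is certainly infinite — indeed this holds for any $p$) this argument goes through, and assembling the rows as above yields $M$ and the $z_i^*$ with all the required properties.
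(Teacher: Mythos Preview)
Your proposal is correct and rests on the same core idea as the paper's proof --- exploit the infinite residue field $\overline{\mF_p}$ of $\cO_K$ to choose coefficients generically, avoiding finitely many bad linear conditions --- but you organize it more cleanly. The paper builds each $z_i^*$ by an iterative process: first pick $z_i$ of minimal valuation generating $K(P)$, then for each $\sigma$ separately find a combination $z_\sigma$ realizing the minimum $v(z-\sigma(z))$, and finally form $z_i^* = z_i + \sum_\sigma u_\sigma z_\sigma$ choosing the $u_\sigma$ one at a time so that adding a new correction term does not spoil the minima already achieved. You instead identify all bad loci (for (2), for each $\sigma$ in (3), and for row-independence) as proper $\overline{\mF_p}$-subspaces of $(\overline{\mF_p})^g$ up front and avoid them simultaneously; this is shorter and makes the invertibility of $M$ transparent. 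One imprecision worth noting: your treatment of condition (1) claims the primitive-element bad locus is a union of hyperplanes compatible with the residue-field avoidance, but those are $K$-hyperplanes $\sum c_j(x_j-\sigma(x_j))=0$, not $\overline{\mF_p}$-hyperplanes, so they do not slot directly into your scheme. This gap is harmless, however, because condition (3) already implies (1): for any $\sigma$ not fixing $K(P)$ one has $m_\sigma<\infty$, so $v(z_i^*-\sigma(z_i^*))=m_\sigma<\infty$ forces $\sigma(z_i^*)\neq z_i^*$, whence $K(z_i^*)=K(P)$. So the hyperplanes you wrote down for (3) already suffice for (1), and no separate argument is needed.
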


\begin{proof}
Let $e := [K(P) : K]$. 
Our proof will involve making a series of linear combinations. 
To begin, we will construct the element $z_1^*$. 
First, consider all the linear combinations of the form
\begin{equation}\label{eqn:1s}
\cB_1:=\{z = u_1x_1 + \cdots + u_gx_g \text{ where } u_i\in(\cO_K)^{\times} \cup \{0\} \text{ and }u_1 \neq 0 \}. 
\end{equation}

By our assumptions on $K$, the set of $u_i (\mbox{mod }p)$ is infinite, with $u_i$ as in the above formula, and hence we may find one linear combination, call it $z_1$, satisfying the following two conditions:
\begin{enumerate}
\item $v(z) \geq v(z_1)$ for all other linear combinations $z$ from $\cB_1$,
\item $K(z_1) \cong K(P)$.
\end{enumerate}
To show that condition (2) holds, consider the following. 
There are exactly $e$ embeddings of $K(x_1, \dots ,x_g)$ into the fixed algebraic closure $\Kbar$, call them $\sigma_1, \dots, \sigma_e$. Note that the vectors ${\textbf{w}}_j  := (\sigma_j(x_1), \dots, \sigma_j(x_g))$, $ 1\le j \le e$, are distinct. Indeed, if for some $ i \ne j$ the vectors ${\textbf{w}}_i$ and ${\textbf{w}}_j$ coincide, then $\sigma_i$ and $\sigma_j$ will coincide at $x_1,\dots,x_g$ and so they will coincide on $K(x_1,\dots,x_g)$, which is not the case. 

Consider now, for each pair $(i,j)$ with $1\le i, j \le e$ and $i\ne j$, the hyperplane ${\mathcal H}_{i,j}$ given by 
\[
{\mathcal H}_{i,j} = \brk{ (c_1,c_2,\dots,c_g) :  c_1, \dots, c_g \in K , \sum_{l = 2}^g  c_l ( \sigma_i(x_l) - \sigma_j(x_l)) = 0 }.
\]
Since the vectors ${\textbf{w}}_j$, $1 \le j \le e$ are distinct, none of ${\mathcal H}_{i,j}$ covers the full space. 
Denote by ${\mathcal H}$ the union of these finitely many hyperplanes.
Choose now any 
$c_1,\dots,c_g \in K$ such that the point $(c_1,c_2,\dots,c_g)$ lies outside ${\mathcal H}$. Then we claim that the element
\[
z := c_1x_1+ \cdots + c_g x_g
\]
satisfies $K(z) = K(x_1,\dots,x_g)$. Indeed, $\sigma_1(z),\dots,\sigma_e(z)$ are distinct. For, if two of them are equal, say 
$\sigma_i(z) = \sigma_j(z)$ with $i\ne j$, then $(c_1,c_2\dots,c_g) $ is forced to lie in ${\mathcal H}_{i,j}$. Thus
$\sigma_1(z),\dots,\sigma_e(z)$ are distinct, so $z$ has at least $e$ distinct conjugates over $K$. Hence $[K(z):K] \ge e$ and in conclusion $K(z) = K(x_1,\dots,x_g)$ i.e., $z_1$ has degree $e$ over $K$.

We pause to note that the matrix $M$ representing the change of coordinates $(z_1,x_2,\dots,x_g)^t= M (x_1,x_2,\dots,x_g)^t$ is invertible.
Indeed, the matrix $M$ has units along the diagonal, the coefficients of the linear combination $z_1$ in the first row, and zeros elsewhere, hence the determinant is a unit. 

We now look at the distances between these linear combinations and various of their conjugates over $K$. 
Fix $\sigma \in \text{Gal}(\overline{\Q_p}/K)$ and consider the infimum of the values $v(z - \sigma(z))$ for the linear combinations $z$ as in \eqref{eqn:1s}, i.e. we look at $\inf\{ v\left(z-\sigma(z)\right)\ |\ z\in \cB_1\}$.  
We first show that this infimum exists and is attained by some linear combination. 
Note that if $\sigma_{|K(z_1)} = \text{id}$, then since all linear combinations belong to $K(x_1,\dots,x_g)$, we have that $z - \sigma(z) = 0$ for all linear combinations $z$, i.e. $\inf\{ v\left(z-\sigma(z)\right)\ |\ z\in \cB_1\}=\infty=v(z_1-\sigma(z_1))$. 
If we consider $\sigma$ such that $\sigma_{|K(z_1)} \neq \text{id}$, then we at least have that $v(z - \sigma(z)) < \infty$ for some linear combinations $z$, for example for $z = z_1$.
We have that the set of possible valuations is discrete, and the set of values $v(z - \sigma(z))$ has a lower bound, namely $\min\{v(x_1 - \sigma(x_1)),\dots,v(x_n - \sigma(x_g))\}$. 
Thus, the infimum of the set of values $v(z - \sigma(z))$ is attained by some linear combination $z$ from \eqref{eqn:1s}, but note it need not be obtained by $z_1$. 

Let $G$ denote $\gal(\widetilde{K(P)}/K)$ where $\widetilde{K(P)}$ is the Galois closure of $K(P)/K$.
For a fixed  $\sigma\in G$, let $z_{\sigma}\in \cB_1$ denote one such linear combination attaining the minimum $v(z_\sigma - \sigma(z_\sigma))=\min\{ v\left(z-\sigma(z)\right)\ |\ z\in \cB_1\}$ . 
We claim that we can find a linear combination of $z_1$ and of all of these $z_{\sigma}$ where $\sigma \in G$ which simultaneously achieves these minima. 
To do this, consider all linear combinations
\begin{equation}\label{eqn:2}
z^* = z_1 + \sum_{\sigma \in G}u_{\sigma}z_{\sigma} \quad  \text{ where } u_{\sigma} \in (\cO_K)^{\times}\cup \{0\}.
\end{equation}
We will choose the $u_\sigma$'s such that each such $z^*$ will live in $\cB_1$. 
 
To achieve our desired simultaneous minima, we start with one $\sigma \in G$, call it $\sigma_1$. 
First, we let $z^*= z_1$. 
This linear combination might work in that it already attains the minimum at $\sigma_1$; by this we mean that $v(z - \sigma_1(z)) \ge v(y_1 - \sigma_1(y_1))$ holds for all linear combinations $z\in \cB_1$ from \eqref{eqn:1s}. 
If this is the case, then we set $y_1:=z_1$. 
Now suppose that $z_1$ does not attain the minimum at $\sigma$. 
In this case, we may use any unit $u\in (\cO_K)^{\times}$ and set $z^*:=  z_1 + uz_{\sigma_1}$. 
Indeed, for any unit $u \in (\cO_K)^{\times}$ and $z^*$ above, we have that 
$$v(z^* - \sigma_1(z^*)) =v\left(z_1-\sigma_1(z_1)+u(z_{\sigma_1}-\sigma_1(z_{\sigma_1})\right)=
 v(z_{\sigma_1} - \sigma_1(z_{\sigma_1})),
 $$ 
 because $v(z_1-\sigma_1(z_1))>v(z_{\sigma_1}-\sigma_1(z_{\sigma_1})$.
Let $y_1:=z_1+uz_{\sigma_1}$ with unit $u \in (\cO_K)^{\times}$ such that $y_1\in \cB_1$. We have that for such
 $y_1\in \cB_1$, $v(y_1-\sigma_1(y_1))\le v(z-\sigma_1(z)$ for all $z\in \cB_1$ and that the $u's$ with $y_1\in \cB_1$
 have the property that $u \pmod p$ avoids a finite number of elements in $\overline{\F_p}$. 
 
For another automorphism $\sigma_2\in G$,  we proceed along the same lines, that is: if $y_1$ has the property
that $v(y_1-\sigma_2(y_1))\le v(z-\sigma_2(y_1))$ for all $z\in \cB_1$ we set $y_2:=y_1$.
If the above is not true, let $z^*:=y_1+ux_{\sigma_2}$, for some $u\in \cO_K^\times$. Then as above we have:
$v(z^*-\sigma_2(z^*))=v\left(x_{\sigma_2}-\sigma_2(x_{\sigma_2})\right)$ therefore $z^*$ realizes the minimum for $\sigma_2$, for all
$u's$ for which $z^*\in \cB_1$. What about for $\sigma_1$. The worst that can happen is that $v(y_1-\sigma_1(y_1))=v\left(x_{\sigma_2}-\sigma_1(x_{\sigma_2})\right)$, i.e. ~if we denote by $\pi$ a uniformizer of $K(P)$, we have $y_1-\sigma_1(y_1)=a\pi^\alpha$
$x_{\sigma_2}-\sigma_1(x_{\sigma_2})=b\pi^\alpha$, with $a,b\in \cO_{K(P)}^\times$. Therefore $z^*-\sigma_1(z^*)=(a+ub)\pi^a.$
Now the residue field of $\cO_{K(P)}$ is $k$, therefore by choosing $u\in \cO_K^{\times}$ such that
$a+ub(\mbox{ mod }\pi)\neq 0$ we have $v(z^*-\sigma_1(z^*))=v(y_1-\sigma_1(y_1))$ and therefore $y_2:=z^*$ realizes the minima for 
both $\sigma_1$ and $\sigma_2$.

Continuing in this fashion, we arrive at the conclusion that there exist linear combinations of the form
\begin{equation}
z^*_1 = z_1 + \sum_{\sigma \in G}u_{\sigma}z_{\sigma}  
\end{equation}
where $u_{\sigma} \in (\cO_K)^{\times}\cup \{0\}$ which satisfy the following four conditions:
\begin{enumerate}
\item $z_{1}^* \in \cB_1$, 
\item $K(z^*_1) \cong K(P)$,
\item $v(z^*_1) = \min\{ v(x_1),\dots, v(x_n)\}$,
\item $v(z^{*}_1 - \sigma(z^*_1)) = \min\{ v(x_1 - \sigma(x_1)),\dots, v(x_n - \sigma(x_g))\}$ for all $\sigma \in G$,
\end{enumerate}
as desired. 
Again, we pause to note that the matrix $M$ representing the change of coordinates $(z_1^*,x_2,\dots,x_g)^t= M (x_1,x_2,\dots,x_g)^t$ is invertible.
Indeed, the matrix $M$ has units along the diagonal, the coefficients of the linear combination $z_1^*$ in the first row, and zeros elsewhere, hence the determinant is clearly a unit. 

We now wish to iterate this construction as follows. 
First, consider the set of all linear combinations
\begin{equation}\label{eqn:firstlinearcombo}
\cB_2:=\{z' = u_1z_1^* + u_2x_2 + \cdots + u_gx_g \text{ where } u_i\in(\cO_K)^{\times} \cup \{0\} \text{ and }u_2 \neq 0 \}. 
\end{equation}
Then, we can repeat the above construction to arrive at a linear combination $z_2 \in \cB_2$ satisfing:
\begin{enumerate}
\item $v(z') \geq v(z_2)$ for all other linear combinations $z$ from $\cB_2$,
\item $K(z_2) \cong K(P)$.
\end{enumerate}
Furthermore, we can follow the above construction to say that there exist linear combinations of the form
\begin{equation}
z^*_2 = z_2 + \sum_{\sigma \in G}u_{\sigma}z_{\sigma}  
\end{equation}
where $u_{\sigma} \in (\cO_K)^{\times}\cup \{0\}$ which satisfy the following four conditions:
\begin{enumerate}
\item $z_{2}^* \in \cB_2$, 
\item $K(z^*_2) \cong K(P)$,
\item $v(z^*_2) = \min\{ v(x_1),\dots, v(x_n)\}$,
\item $v(z^{*}_2 - \sigma(z^*_2)) = \min\{ v(x_1 - \sigma(x_1)),\dots, v(x_g - \sigma(x_g))\}$ for all $\sigma \in G$,
\end{enumerate}
Note that the matrix $M'$ representing the change of coordinates $(z_1^*,z_2^*,\dots,x_g)^t = M'(z_1^*,x_2,\dots,x_g)^t$ is invertible. 
Indeed, $M'$ has units on the diagonal, the coefficients of the linear combination $z_2^*$ in the second row, and has zeros everywhere else. 
Although this matrix is not triangular, we can make it so by switching the second row with the first and interchanging the first and second columns; these operations will not change the determinant. After these operations, the matrix becomes triangular with units on the diagonal, and hence the will be invertible.
Moreover, we see that the matrix $M''$ representing the change of coordinates $(z_1^*,z_2^*,\dots,x_g)^t = M''(x_1,x_2,\dots,x_g)^t$ is invertible since $M'' = M' \cdot M$. 

We continue in this fashion for all of the remaining coordinates $x_3,\dots,x_g$ and arrive at our desired claim, namely that there exists linear combinations $z_1^*,\dots,z_g^*$ of $x_1,\dots,x_g$ with coefficients in $(\cO_K)^{\times} \cup \{0\}$ which satisfy: 
\begin{enumerate}
\item $K(z_{i}^*) \cong K(P)$,
\item $v(z_{i}^*) =  \min\{ v(x_{1}),\dots, v(x_{g})\}$,
\item $v(z_{i}^* - \sigma(z_{i}^*)) = \min\{ v(x_{1} - \sigma(x_{1})),\dots, v(x_{g} - \sigma(x_{g}))\}$ for all $\sigma G$
\end{enumerate}
and such the matrix $M$ representing the change of coordinates $(z_1^*,\dots,z_g^*)^t= M (x_1,\dots,x_g)^t$ is invertible.
\end{proof}

We now complete the proof of  \autoref{thm:tamelyramifiedtorsion}. 
\begin{proof}[Proof of \autoref{thm:tamelyramifiedtorsion}]
Fix $0\neq P = (x_1,\dots,x_g) \in \sF[p](\cO)$ and let $e = [K(P) : K]$. 
We first remark that since $K$ is completion of the maximal unramified extension, we have that $e > 1$, and hence the extension $[K(P) : K]$ is totally ramified. Indeed, this follows from that fact that the group-scheme $\sF[p]$ is connected. 
In \autoref{lemma:niceisom}, we constructed linear combinations $z_i^*$ of $x_1,\dots,x_g$ satisfying certain properties. 
Let $Z_i^*$ denote the same linear combinations of the coordinates $X_1,\dots,X_g$ (so if we were to evaluate $Z_i^*$ at $(x_1,\dots,x_g)$ we would recover $z_i^*$). 
The last condition from \autoref{lemma:niceisom} implies that the change of variables $(X_1,\dots,X_g) \mapsto (Z_1^*,\dots,Z_g^*)$ is an isomorphism of formal groups.

We claim that the isomorphism of formal groups $(X_1,\dots,X_g) \mapsto (Z_1^*,\dots,Z_g^*)$ will preserve strictness. 
As each of the $Z_i^*$ are linear combinations of $X_1,\dots,X_g$ with coefficients in $(\cO_K)^{\times} \cup \{0\}$, this isomorphism of formal groups will act linearly on terms of minimal degree, and hence it changes $F_1,\dots,F_g$ by a linear transformation, which is invertible. 
We note that it also does the same to $G_1,\dots,G_g$, therefore, it transforms the set of solutions of the system by an invertible transformation. Moreover, the system having or not having a single solution $(0,\dots,0)$ is the same before or after an isomorphism.
We pause to note that it is crucial that the degrees $d_1, d_2, \dots, d_g$ from \autoref{defn:strict} are all equal

For the remainder of the proof, we work with this isomorphic formal group with coordinates $(Z_1^*,\dots,Z_g^*)$. 
We note that the vector $(z_1^*,\dots,z_g^*)$ will reduce mod $p$ to the point $(0,\dots,0) \in k^g$ because all $z_i^*$ have valuations strictly positive. 
But the $z_i^*$ have the same valuation so we can divide all of them by one of them, and consider the vector $(z_1^*/z_g^*,\dots,z_{g-1}^*/z_g^*,1)$, which will not reduce the zero vector over the residue field. 
Since $\sF$ was assumed to be strict, the reduction of $(z_1^*/z_g^*,\dots,z_{g-1}^*/z_g^*,1)$ cannot be a common root of all of $G_1,\dots,G_g$. Therefore, there exists an index $j$ for which $G_j(z_1^*/z_g^*,\dots,1)$ is not zero in the residue field $k$, and hence the valuation of  $F_j(z_1^*,\dots,z_n^*)$ equals the valuation of each of its individual monomials.

We now want to determine the valuation of $z_j^*$, and hence the valuation of every other $z_i^*$ as they have the same valuation. 
By considering the equation $f_j(z_1^*,\dots,z_g^*)$, we have 
\[
0 = pz_j^* + p(\text{terms of degree between $2$ and $d_j - 1$}) + F_j(z_1^*,\dots,z_n^*) + (\text{higher degree terms}).
\]

We claim that $v(z_j^*) = 1/(d_j-1)$ where $d_j$ is the degree of $F_j$.
To see this choose a unit $u_1$ in $\cO_K$ that is a representative for the element in the residue field corresponding to $z_1^*/z_g^*$, and similarly choose $u_2,\dots,u_{g-1}$. Then $F_j(u_1,\dots,u_{g-1},1)$ is a unit in $\cO_K$, because its image in the residue field is nonzero, by the above choice of $j$. 
But this is a form (of degree $d_j$) so we can divide by $u_j$ inside $F_j$, and re-denoting the $u_j$'s in consideration, we have that $F_j( u_1,\dots,u_{j-1}, 1, u_{j+1} ,\dots,u_g)$ is a unit, and moreover, 
\[
F_j(z_1^*,\dots,z_g^*)  =  F_j(u_1,...1,...u_g) z_j^{*^{d_j}}  +  ( \text{terms of strictly larger valuation}).
\]
Plugging this into the the above equation, we arrive at the equation
\begin{equation}\label{eqn:firstpowerseries}
0 = pz_j^* + F_j(u_1,\dots,1,\dots,u_g) z_j^{*^{d_j}}  + (\text{terms of strictly larger valuation}).
\end{equation}
The minimum valuation in the equality of \eqref{eqn:firstpowerseries} must be attained in at least two terms, and these terms are forced to be $pz_j^*$ and $F_j(u_1,\dots,1,\dots,u_g) z_j^{*^{d_j}}$. Our claim now follows since $F_j(u_1,\dots,1,\dots,u_g) $ is a unit in $\cO_K$.

We now want to study the relationship between the valuations $v(z_j^* - \sigma(z_j^*))$ where $\sigma \in G$. 
Let $u := F_j(u_1,\dots,1,...u_g)$ which is a unit in $\cO_K$. 
For each $\sigma \in G$, we will consider \eqref{eqn:firstpowerseries} and 
\begin{equation}\label{eqn:secondpowerseries}
0 = p\sigma(z_j^*) + u \sigma(z_j^{*})^{d_j} + (***)
\end{equation}
where $(***)$ corresponds to terms strictly larger valuation. 
If we subtract equality \eqref{eqn:secondpowerseries} from \eqref{eqn:firstpowerseries}, we arrive at the following:
\begin{equation}\label{eqn:finalpoweseries}
0 = p(z_j^* - \sigma(z_j^*)) + u(z_j^{*^{d_j}} - \sigma(z_j^{*})^{d_j}) + (\text{interesting terms}).
\end{equation}
By condition (3) of \autoref{lemma:niceisom}, the valuation of the ``interesting terms" from \eqref{eqn:finalpoweseries} will be larger than $v(z_j^* - \sigma(z_j^*))$. 
Indeed, these interesting terms are in fact monomials of the form $z_1^{*^{m_1}} z_2^{*^{m_2}} \dots z_n^{*^{m_g}}$ where some of the $m_i$ could be zero and the total degree is strictly greater than $d_j$. 
In any case, we may deal with the difference of such monomials and their conjugates as follows. Suppose for example, that we have a term of the for $z_2^{*^{m_2}}z_3^{*^{m_3}} - \sigma ( z_2^{*^{m_2}}z_3^{*^{m_3}})$. Then by adding and subtracting the term  $z_2^{*^{m_3}} \sigma( z_3^{*^{m_3}})$, the difference we need to deal with can then be written as $(z_2^* - \sigma(z_2^*))$ times something of positive valuation, plus $(z_3^* - \sigma(z_3^*)$ times something of positive valuation, and so we can use property (3) of \autoref{lemma:niceisom} for this particular $\sigma$ to get our desired claim. 

We now arrive at the crucial claim of the proof. 
Recall that $G$ denotes the Galois group of the Galois closure of $K(P)/K$. 
We claim that $v(z_j^* - \sigma(z_j^*)) = v(z_j^*)$ for all $\sigma \in G$. 
Assume that $v(z_j^* - \sigma(z_j^*)) = v(z_j^*) + t$  where $t > 0$. 
By the above discussion and condition (3) of \autoref{lemma:niceisom}, we can save the value $t$ from each term, and since there must be at least two terms of equal valuation in \eqref{eqn:finalpoweseries}, we have that
\[
v(p(z_j^* - \sigma(z_j^*))) \geq v(u(z_j^{*^{d_j}} - \sigma(z_j^{*})^{d_j})).
\]
Note that we cannot guarantee the equality of these valuations because there could be other terms of total minimal degree other than $u(z_j^{*^{d_j}} - \sigma(z_j^{*})^{d_j})$, but the above inequality will suffice. 
Using the above inequality and previous equality $v(pz_j^*) = v(uz_j^{*^{d_j}})$, and letting $w = \sigma(z_j^*)/z_j^*$, we have that 
\[
t = v(z_j^* - \sigma(z_j^*))  - v(z_j^*) = v((z_j^* - \sigma(z_j^*))/z_j^*) = v(1 - w) \geq v(1 - w^{d_j}).
\]
Let $y = 1 - w$. We have that $v(y) = t$, which by assumption is strictly greater than 0. 
We now arrive at a contradiction by considering the above inequality and the equation
\[
1 - w^{d_j} = 1 - (1 - y)^{d_j} = d_jy + (\text{terms times }y^2) + y^{d_j},
\]
and noting that all terms in the above have valuation strictly greater than $v(y) = t$. 
Therefore, we have that $t= 0 $, and hence $v(z_j^* - \sigma(z_j^*)) = v(z_j^*)$ for all $\sigma \in G$. 

To conclude our proof, we use Krasner's lemma \cite[Exercise II.2.1]{Serre} to explicitly describe the extension $K(z_j^*)/K$ and show that $[K(z_j^*) : K] = d_j - 1$. 
Recall that our $z_j^*$ satisfies the equation \eqref{eqn:firstpowerseries}. 
Consider the polynomial $P(Z) = p + uZ^{d_j - 1}$ where $u = F_j(u_1,...1,...u_n)$ as above. 
Note that $z_j^*$ is not a root of $P(Z)$, but it satisfies the following inequality $v(P(z_j^*)) > v(pz_j^*) $ where the right side here is also equal to $v( u z_j^{*^{d_j}})$.

On the other hand, the roots $\theta_1,\dots,\theta_{d_j - 1}$ of $P(Z)$ each have valuation exactly $1/(d_j - 1)$ since $P(Z)$ is Eisenstein at $p$. 
We now compute the valuation of the derivative of $P(Z)$ evaluated at a root in two different ways. 
First, $P'(\theta_l) = u \prod_{i\neq l} (\theta_l - \theta_i)$ and hence
\[
v(P'(\theta_l)) = \sum_{i\neq l} v(\theta_l - \theta_i) \geq \frac{d_j - 2}{d_j - 1}.
\]
Second, we directly compute that $P'(\theta_l) = (d_j - 1)u\theta_l^{d_j - 2},$ which yields
\[
v(P'(\theta_l)) = \frac{d_j - 2}{d_j - 1}. 
\]
The first inequality and the second equality imply that $v(\theta_l - \theta_i) = 1/(d_j - 1)$ for all $1 \leq i \neq j \leq d_j - 1$. 
We also have that $P(z_j^*) = u(z_j^* - \theta_1)\cdots (z_j^* - \theta_{d_j - 1})$ and hence 
\[
v(P(z_j^*)) = \sum_{i = 1}^{d_j -1} v(z_j^* - \theta_i).
\]
There are exactly $d_j - 1$ terms here and since $v(P(z^*)) > 1$, it follows that at least one term must be strictly larger than $1/(d_j - 1)$. 
Without loss of generality, we may assume that $v(z_j^* - \theta_1) > 1/(d_j - 1).$ 
Now we have the strict inequality
\[
v(z_j^* - \theta_1) > 1/(d_j - 1) = v(\theta_1 - \theta_i)
\]
for all $1 < i \leq d_j - 1$, and hence Krasner's lemma implies that $K(\theta_1) \subseteq K(z_j^{*})$. 
Recall that we have just shown that $v(z_j^* - \sigma(z_j^*)) = v(z_j^*) = 1/(d_j - 1)$ for all $\sigma \in G$. 
Moreover, we can just switch $z_j^*$ and $\theta_1$ to arrive at the inequality
\[
v(\theta_1 - z_j^*) > v(z_j^* - \sigma(z_j^*)) = 1/(d_j - 1),
\]
and so we can apply Krasner's lemma again to deduce that $K(z_j^*) \subseteq K(\theta_1)$. 
Therefore, we have shown that $K(P)\cong K(z_j^*)\cong K(\theta_1)$ and this extension is totally ramified of degree $d_j- 1$, hence tamely ramified. We also have that $z_j^*$, which is a linear combination of $x_1,\dots, x_g$ with unit coefficients, is a uniformizer for $K(P)$ and hence $\cO_{K(P)} \cong \cO_K[x_1,\dots,x_g]$. Finally, we note that there exists some coordinate $x_i$ of $P$ which has valuation $v(x_i) = v(z_j^*)$ by condition (3) of \autoref{lemma:niceisom} and hence $x_i$ is a uniformizer for $K(P)$ as well. 

The second statement of \autoref{thm:tamelyramifiedtorsion} follows because $K(\sF[p])$ is the compositum of the fields $K(P)/K$ as $P$ varies over points in $\sF[p]$ and the compositum of tamely ramified extensions is again tamely ramified. 
\end{proof}

  \bibliography{refs}{}
\bibliographystyle{amsalpha}

 \end{document}